\documentclass{article}
\usepackage{graphicx} 

\usepackage[english]{babel}
\usepackage[utf8]{inputenc}
\usepackage[T1]{fontenc}

\usepackage[a4paper,top=3cm,bottom=2cm,left=3cm,right=3cm,marginparwidth=1.75cm]{geometry}


    \usepackage{amsthm}
    \usepackage{amssymb}
    \usepackage{amsmath}
    \usepackage{dirtytalk}
    \usepackage{mathtools}
    \usepackage{epsfig}
    \usepackage{caption}
    \usepackage[colorinlistoftodos]{todonotes}
    \usepackage[colorlinks=true, allcolors=blue]{hyperref}
    \usepackage{xcolor}
    \usepackage{pinlabel}
    \usepackage{geometry}
    \usepackage{subcaption}
    \usepackage{hyperref}
    \usepackage{authblk}
    \usepackage{longtable}
    \usepackage[shortlabels]{enumitem}

\theoremstyle{definition}
\newtheorem{definition}{Definition}[section]

\theoremstyle{plain}
\newtheorem{theorem}{Theorem}[section]
\newtheorem{lemma}{Lemma}[section]

\theoremstyle{remark}

\title{Knots and non-orientable surfaces in 3-manifolds}

\author{Alessia Cattabriga\footnote{Department of Mathematics, University of Bologna, piazza di Porta San Donato 5, 40126 Bologna - Italy alessia.cattabriga@unibo.it }, Paolo Cavicchioli\footnote{Inštitut za matematiko, fiziko in mehaniko (IMFM), Jadranska ulica 19, 1000 Ljubljana - Slovenja, paolo.cavicchioli@imfm.si
}, Rama Mishra\footnote{Department of Mathematics, Indian Institute of Science Education and Research, Pune, Maharashtra, 411008, India, r.mishra@iiserpune.ac.in}, Visakh Narayanan\footnote{Department of Mathematics, Indian Institute of Science Education and Research, Mohali, Punjab, 140308-India visakh@iisermohali.ac.in } }

\begin{document}
\maketitle
\begin{abstract}
 In this article, we propose a new approach for describing and understanding knots and links in a 3-manifold through the use of an embedded non-orientable surface. Specifically, we define a plat-like representation based on this non-orientable surface. The method applies to manifolds of the form  $M=\mathcal H\cup_{\varphi} \mathcal C(U)$ where $\mathcal H$ is a handlebody, $\mathcal C(U)$ is the mapping cylinder of the orientating two sheeted covering of a non-orientable closed surface $U$  and $\varphi:\partial \mathcal H\to \partial \mathcal C(U)$ is an attaching homeomorphism. We show that, by fixing such a splitting any link in the manifold can be represented as a plat-like closure of an element of the surface braid group of $\partial \mathcal H$. Manifolds of this type were extensively studied by J.H. Rubinstein \cite{rubinstein1978one}, where it is shown that any 3-manifold $M$, with a non-vanishing $H_2(M,\frac{\mathbb{Z}}{2\mathbb{Z}})$ will admit such a splitting. Thus the method is quite general. We provide explicit examples of such embeddings in lens spaces $L(2k,q)$ and the trivial circle bundles over orientable closed surfaces, $\Sigma\times S^1$. \\

 \textbf{Mathematics Subject Classification (2020):}  57K10, 20F36, 57K30.  
\end{abstract}

\section{Introduction}

\par The triumph of classical knot theory mainly has been facilitated by the availability of several methods of representing knots in $S^3$. Among these representations, braid groups have played a pivotal role. The use of braid groups to represent knots and links
dates back to the work of Alexander \cite{alexander1923lemma} and Markov \cite{markov1935freie} for the standard closure and Hilden \cite{hilden1975generators} and Birman \cite{birman1976stable} for the plat closure. Both representations have been successfully generalized to closed, connected, orientable 3-manifolds \cite{bellingeri2012hilden, cattabriga2018markov,  cattabriga2008extending,  cavicchioli2021algorithmic, cavicchioli2023mixed,diamantis2015braid,doll1993generalization, diamantis2015braid,lambropoulou1997markov,lambropoulou2006algebraic,skora1992closed}. These can be  highly effective in the study of knots and links, as shown by various authors, see for example, \cite{cattabriga2006alexander,diamantis2016topological,gabrovsec2011homflypt}). In particular, the generalization of plat closures relies on Heegaard splittings, which decompose a 3-manifold into two handlebodies by means of an embedded, closed, connected orientable surface.\\
 
 More recently, a new representation involving non-orientable surfaces was introduced in \cite{mishra2023plat}. There, the authors prove that links in \(\mathbb{R}P^3\) can be \say{sewn} around the projective plane at infinity into a plat-like structure. Building on this foundation, the goal of this article is provide a plat representation for knots in a large class of closed, connected, orientable 3-manifolds that contain non-orientable surfaces. The non-orientable surfaces embedded in lens spaces and in manifolds of the form $\Sigma\times S^1$, where $\Sigma$ is a closed connected orientable surface were studied by Bredon and Wood \cite{bredon1969non} using the work of Rene Thom. The same results were reproduced using simpler methods by W. End \cite{end1992non}. It was observed by Rubinstein \cite{rubinstein1978one,rubinstein1979,rubinstein1982} that the embeddings of non-orientable surfaces in a 3-manifold can be very useful in extracting topological information. Rubinstein defined a notion of a \say{one-sided Heegaard splitting} where a 3-manifold can be constructed from gluing the boundaries of a handlebody $\mathcal{H}$ and the mapping cylinder $\mathcal{C}(U)$ of the orientating two-sheeted cover of a closed non-orientable surface, $U$ and showed that any 3-manifold $M$ with a non-vanishing $H_2(M, \frac{\mathbb{Z}}{2\mathbb{Z}})$ will admit such a splitting. We refer to a 3-manifold which has such a splitting as \say{splittable}.\\
 
 \par In this article, we exploit such a splitting to develop a plat-like description for knots and links in $M$. We exhibit  that the embeddings of non-orientable surfaces in lens spaces of type $L(2k,q)$ and the manifolds of the type $\Sigma\times S^1$ described in \cite{bredon1969non, end1992non} induce one-sided Heegaard splittings. Since there are 3-manifolds which does not contain any non-orientable surface, such as $S^3$ or more generally lens space of the type $L(2k+1,q)$ it is clear that not all 3-manifolds are splittable. The focus of this article is to describe the topological aspects of this plat representation. A detailed algebraic study of the plat-like representation using surface braid groups associated with the splitting surface $\partial \mathcal H\subset M$ will be presented in a subsequent paper as well as a more systematic study of splittable 3-manifolds. \\

\noindent \textbf{Organization of the paper:} In Section \ref{section_ourmanifolds}, we will recall some definitions regarding splittable 3-manifolds. In Section \ref{section_nonorientable}, we define our plat-like representation for links in a  splittable 3-manifold and we establish any link in this manifold is isotopic to a plat closure of this form. In Sections \ref{section_lens_spaces} and \ref{surfacebudlesection} respectively, the examples of lens spaces  and 3-manifolds of the type $\Sigma \times S^1$, with $\Sigma$ a closed connected orientable surface, are discussed. 

\section{One-sided Heegaard splittings}
\label{section_ourmanifolds}
\par By Heegaard's theorem, all closed connected orientable 3-manifolds can be constructed by gluing two handlebodies along their boundaries. Rubinstein \cite{rubinstein1978one} generalized this to a splitting of the type $M=\mathcal{H}\cup_\varphi \mathcal{C}(U)$, where a 3-manifold is constructed by  gluing a handlebody $\mathcal H$ with the mapping cylinder $\mathcal C=\mathcal C(U)$ of the orientating double cover of a closed connected non-orientable surface $U$. Rubinstein called this decomposition as a \say{one-sided Heegaard splitting}. In this article a manifold admitting such a splitting is called \say{splittable}. Formally, we may define these as follows.\\

\begin{definition}
    A closed orientable 3-manifold $M$ is called splittable if there exists an embedded non-orientable surface $U$ in $M$ such that $M\setminus\textup{int}(\mathcal N(U))$ is homeomorphic to a handlebody, with $\mathcal N(U)$ a (closed) tubular neighborhood of $U$ in $M$.
\end{definition}

Since a tubular neighbourhood of any non-orientable surface $U$ in an orientable 3-manifold $M$ is homeomorphic to the mapping cylinder of the canonical orientating double cover $\pi:\Sigma\to U$, with $\Sigma$ a closed connected orientable surface, it follows that $M$ admits a splitting $\mathcal{H}\cup_\varphi \mathcal{C}(U)$ with $\varphi: \partial \mathcal H\to\Sigma$ a gluing homemomorphism  classified by the isotopy classes of self homeomorphisms of $\Sigma$. Note that if $U$ has (non-orientable) genus $g$, say $U=U_g$, then the two sheeted covering $\Sigma$ has (orientable) genus $g-1$, say $\Sigma=\Sigma_{g-1}$; so, since we want $\mathcal C$ and  $\mathcal H$ to have homemorphic  boundaries we will glue $\mathcal C(U_{g})$ with $\mathcal H_{g-1}$ using  (an isotopy class of) a  self homeomorphisms of $\Sigma_{g-1}$, a closed connected orientable surface of genus $g-1$.\\

One might what kinds of 3-manifolds can be obtained using by this method. Clearly, not all closed, connected, orientable 3-manifolds fall into this category, as they must contain an embedded non-orientable surface.  For example, lens spaces  $L(p,q)$ with $p$ odd (see \cite{bredon1969non}) cannot be obtained using this approach. This raises the question: do all closed, connected, orientable 3-manifolds containing a closed, connected, non-orientable submanifold admit the decomposition described above? In \cite[Theorem 1]{rubinstein1978one} Rubinstein gives the following  answer.
\begin{theorem}[\cite{rubinstein1978one}]\label{teo_rubinstein}
Let $M$ be a three manifold. For every non-trivial cycle $\alpha$ in $H_2(M,\frac{\mathbb{Z}}{2\mathbb{Z}})$ there is an embedding of a non-orientable surface $U$ such that $[U]=\alpha$ with $M\setminus U$ homeomorphic to an open handlebody.
\end{theorem}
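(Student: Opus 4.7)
My plan is to reduce the statement to a question about equivariant Heegaard splittings of the double cover of $M$ classified by the cohomology class Poincaré dual to $\alpha$. By Poincaré--Lefschetz duality, $H_2(M,\mathbb{Z}/2\mathbb{Z})\cong H^1(M,\mathbb{Z}/2\mathbb{Z})\cong \operatorname{Hom}(\pi_1(M),\mathbb{Z}/2\mathbb{Z})$, and the non-trivial class $\alpha$ corresponds to a connected, regular, two-sheeted covering $p:\tilde M\to M$ with free deck involution $\tau$. Producing an embedded closed non-orientable surface $U\subset M$ with $[U]=\alpha$ and $M\setminus U$ an open handlebody is then equivalent to producing a classical Heegaard splitting $\tilde M=V_1\cup_\Sigma V_2$ whose Heegaard surface $\Sigma$ is $\tau$-invariant and whose two sides are swapped by $\tau$: one takes $U=\Sigma/\tau$ and $\mathcal{H}=V_i/\tau$, noting that a regular neighborhood of $U$ is then the mapping cylinder of $p|_\Sigma:\Sigma\to U$, that the double cover of $M$ associated to $[U]$ coincides with $p$ itself, and hence $[U]=\alpha$.

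To build such a $\tau$-equivariant splitting I would use equivariant Morse theory on $\tilde M$. For any Morse function $h:\tilde M\to\mathbb{R}$ the antisymmetrised function $g:=h-h\circ\tau$ satisfies $g\circ\tau=-g$, and a small generic $\tau$-equivariant perturbation makes $g$ a Morse function retaining this antisymmetry. Its critical points come in $\tau$-paired orbits $\{x,\tau(x)\}$, and since $d\tau_x$ sends $\operatorname{Hess}_x g$ to $-\operatorname{Hess}_{\tau(x)} g$, a critical point of index $k$ at level $c$ is paired with one of index $3-k$ at level $-c$.

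The heart of the argument is then to rearrange $g$ so that every index-$0$ and index-$1$ critical point sits at positive level, and every index-$2$ and index-$3$ critical point at negative level; equivalently, in each $\tau$-pair the critical point of smaller index is to lie on the positive side. Using equivariant Cerf moves---level interchanges and handle cancellations performed simultaneously on $\tau$-images---one first removes cancelling index-$(1,2)$ pairs joined by a gradient trajectory that crosses $g=0$, and then reorders the remaining critical values. Freeness of $\tau$ keeps $x$ and $\tau(x)$ distinct, so the usual transversality for equivariant Morse--Smale flows remains available. A further round of equivariant $1$-handle slides guarantees that $V_1:=g^{-1}([0,\infty))$ is connected; since it then carries a Morse function with only index-$0$ and index-$1$ critical points in its interior, it is a handlebody, and by $\tau$-equivariance $V_2=\tau(V_1)$ is its complementary handlebody. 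Quotienting by $\tau$ produces the required $U\subset M$.

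The principal technical hurdle is precisely this equivariant rearrangement. In ordinary Cerf theory one can freely interchange critical levels or cancel an index-$k$/index-$(k+1)$ pair along a transverse gradient trajectory; in the equivariant setting every such move must be paired with its $\tau$-image, and the subtle case is an index-$1$ critical point $x$ with $g(x)<0$ connected by a gradient trajectory to its index-$2$ partner $\tau(x)$ with $g(\tau(x))>0$. Eliminating such mixed-sign cancelling pairs---without reintroducing them elsewhere and while preserving connectedness of the two handlebodies---requires a careful equivariant handle-calculus, and constitutes, I expect, the bulk of the work in a full proof.
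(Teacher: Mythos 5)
First, note that the paper itself does not prove this statement: it is quoted from \cite{rubinstein1978one}, so your proposal has to be measured against Rubinstein's original argument rather than anything in the text above. Your reduction is sound as far as it goes: the double cover $p:\tilde M\to M$ classified by the Poincar\'e dual of $\alpha$ is connected, its deck involution $\tau$ is free and orientation-preserving, and a Heegaard surface of $\tilde M$ that is $\tau$-invariant with the two handlebodies interchanged does descend to a one-sided non-orientable surface $U$ with $[U]=\alpha$ and open-handlebody complement. The genuine gap is that this equivariant splitting statement is, through exactly that dictionary, \emph{equivalent} to the theorem being proved, and your sketch leaves precisely this step unestablished: the rearrangement of the antisymmetric function $g=h-h\circ\tau$ so that each pair $\{x,\tau(x)\}$ is distributed correctly across the level $g=0$ is only asserted, the mixed-sign cancelling $(1,2)$-pairs are correctly identified as the crux, and no argument is offered for eliminating them; there is no off-the-shelf equivariant Cerf rearrangement theorem that does this, so the proposal is a reformulation plus a plan, not a proof. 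There is also a concrete sign that the handle bookkeeping has not been carried through: the arrangement you aim for is impossible as stated, since if every index-$0$ and index-$1$ critical point of $g$ had positive value and every index-$2$ and index-$3$ critical point negative value, the compact region $V_1=g^{-1}([0,\infty))$ would still contain the absolute maximum of $g$, an interior index-$3$ critical point. You need the opposite distribution (indices $2,3$ above the zero level, $0,1$ below), so that $V_1$ is assembled from $0$- and $1$-handles descending from its maxima via $-g$; this is fixable, but together with the merely asserted connectivity of $V_1$ it confirms that the decisive work is missing.

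For comparison, the result admits a short direct proof in $M$ itself, with no equivariant theory, and this elementary tubing argument is essentially the classical route of \cite{rubinstein1978one}. Represent the Poincar\'e dual of $\alpha$ by a map $M\to\mathbb{RP}^N$ transverse to $\mathbb{RP}^{N-1}$, so that $\alpha$ is represented by an embedded closed surface $K$; tube components together to make $K$ connected, and, since a connected representative of a non-zero class is non-separating, tube $K$ to itself along an arc in $M\setminus K$ that returns to the opposite side of $K$: this makes $K$ non-orientable (one-sided) without changing $[K]$. Now give $W=M\setminus\operatorname{int}\mathcal N(K)$ a handle decomposition with no $3$-handles, $W=H\cup(\text{$2$-handles})$ with $H$ the connected union of the $0$- and $1$-handles, hence a handlebody, and tube $K$ along the properly embedded cocore arcs of the $2$-handles. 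Drilling those arcs out of $W$ leaves exactly $H$, the tubed surface $U$ is still non-orientable with $[U]=\alpha$, and $M\setminus U\cong\operatorname{int}H$ is an open handlebody. If you wish to salvage your approach you must actually prove the equivariant rearrangement lemma you postponed; otherwise the quotient-side argument above already completes the proof and is considerably shorter.
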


Thus, 3-manifolds with non-vanishing $H_2(M,\frac{\mathbb{Z}}{2\mathbb{Z}})$, are splittable. Moreover, in \cite[Corollary 4.5]{end1992non} the following result is proved.

\begin{theorem}[\cite{end1992non}] If $i:U_g\hookrightarrow M$ is an embedding of a closed connected non-orientable surface $U_g$ in a 3-manifold $M$ with a non-trivial normal bundle then $i_*: H_2(U_g,\frac{\mathbb{Z}}{2\mathbb{Z}})\rightarrow H_2(M,\frac{\mathbb{Z}}{2\mathbb{Z}})$ is non-zero.
\end{theorem}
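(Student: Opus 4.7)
The plan is to dualise the question via $\mathbb Z/2$-Poincar\'e duality and then identify the restriction of the dual class with $w_1$ of the normal bundle. Since $M$ is a closed $3$-manifold (and mod $2$ every manifold is orientable), cap product with the $\mathbb Z/2$-fundamental class $[M]_2$ gives an isomorphism $\mathrm{PD}\colon H^1(M;\mathbb Z/2)\xrightarrow{\cong} H_2(M;\mathbb Z/2)$. Let $\tau\in H^1(M;\mathbb Z/2)$ be the unique class with $\mathrm{PD}(\tau)=i_\ast [U_g]$; geometrically, $\tau$ evaluates on a $1$-cycle $\lambda$ transverse to $i(U_g)$ as the parity of $\#(\lambda\cap U_g)$. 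It therefore suffices to show that $\tau\neq 0$.

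To see this, I would prove that its restriction to $U_g$ is exactly the first Stiefel--Whitney class of the normal bundle $\nu$, i.e.\ $i^\ast\tau=w_1(\nu)\in H^1(U_g;\mathbb Z/2)$. Given a loop $\gamma\subset U_g$, push $\gamma$ off $U_g$ inside the tubular neighbourhood $\mathcal N(U_g)$ using a generic section $s$ of $\nu|_\gamma$; the resulting loop $\gamma'$ is freely homotopic to $\gamma$ in $M$, hence $i^\ast\tau([\gamma])=\tau([\gamma'])=\#(\gamma'\cap U_g)\bmod 2$. The points of $\gamma'\cap U_g$ are exactly the zeros of $s$, and the parity of the zero set of a generic section of a line bundle over $S^1$ is precisely the value of $w_1$ on that loop. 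This yields $i^\ast\tau=w_1(\nu)$, which is non-zero by hypothesis; hence $\tau\neq 0$ and so $i_\ast[U_g]\neq 0$.

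The heart of the argument is the identification $i^\ast\tau=w_1(\nu)$, a standard instance of the self-intersection principle that does require a careful transversality argument inside $\mathcal N(U_g)$; this is the only step where I expect any friction. As a concrete geometric alternative that bypasses Poincar\'e duality, one can pick a loop $\gamma\subset U_g$ with $w_1(\nu)[\gamma]\neq 0$, observe that $\nu|_\gamma$ is a M\"obius band $B\subset\mathcal N(U_g)$ with core $\gamma$, and build a loop $\lambda$ by taking a radial fibre arc of $B$ through a point of $\gamma$ and closing it up along a once-around path of $\partial B$. By construction $\lambda\cap U_g$ consists of a single transverse point, so the non-degeneracy of the mod-$2$ intersection pairing $H_2(M;\mathbb Z/2)\times H_1(M;\mathbb Z/2)\to\mathbb Z/2$ forces $i_\ast[U_g]\neq 0$.
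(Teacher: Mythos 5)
The paper does not prove this statement at all: it is quoted from End \cite{end1992non} (Corollary 4.5 there), so there is no internal proof to compare against. Your argument is correct and self-contained, and both of your routes are really the same intersection-theoretic principle: the class $i_*[U_g]\in H_2(M;\mathbb{Z}/2\mathbb{Z})$ pairs nontrivially with a loop that crosses $U_g$ an odd number of times, and the non-triviality of the line bundle $\nu$ (equivalently $w_1(\nu)\neq 0$) is exactly what produces such a loop — either cohomologically, via the standard fact that the restriction of the Poincar\'e dual of an embedded submanifold to the submanifold is the mod $2$ Euler class of its normal bundle, or concretely via the M\"obius band $B=\nu|_\gamma$, whose connected boundary lets you close a fibre arc into a loop meeting $U_g$ in a single transverse point. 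Two small remarks. First, both versions as you wrote them assume $M$ closed (Poincar\'e duality in the first, non-degeneracy of the pairing in the second); the theorem as displayed says only ``3-manifold,'' but in this paper $M$ is closed, connected and orientable, so the hypothesis is harmless here — and note that in the orientable case the normal bundle of a non-orientable $U_g$ is automatically non-trivial, which is how the paper uses the statement. Second, your geometric variant does not actually need non-degeneracy of the pairing $H_2(M;\mathbb{Z}/2\mathbb{Z})\times H_1(M;\mathbb{Z}/2\mathbb{Z})\to\mathbb{Z}/2\mathbb{Z}$, only its well-definedness on homology classes (e.g.\ via the Thom class of the normal bundle of $\lambda$ or of $U_g$): if $i_*[U_g]$ were zero, every loop would meet $U_g$ evenly, contradicting your $\lambda$. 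Phrased that way the argument avoids Poincar\'e duality entirely and works without assuming $M$ closed, which is slightly stronger than what you claimed and closer to the generality of End's original statement.
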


Since the 3-manifolds we are interested in are orientable, the normal bundle of $i(U_g)$ is non-trivial. As consequence all closed, connected, orientable 3-manifolds containing an embedded  closed  connected non-orientable surface are splittable.  \\

Important examples of splittable 3-manifolds are  lens spaces $L(2k,q)$ and trivial circle bundles over closed orientable surfaces, $\Sigma_g\times S^1$. In Section \ref{section_lens_spaces} and Section \ref{surfacebudlesection} we describe the embeddings of non-orientable surfaces constructed by Bredon and Wood \cite{bredon1969non,end1992non} inducing  one-sided splittings.  \\

\par However, the family of splittable manifolds is significantly larger, as we have considerable flexibility in choosing the gluing homeomorphism $\varphi: \partial \mathcal{H}\to \partial \mathcal C (U)$. To illustrate this, we will examine an example where $U$  is a Klein bottle. This is the first interesting case, in terms of genus, since when $g=1$ the surface $U$ is the projective plane and $\partial \mathcal C(U)$ is the 2-sphere. In this case, due to the Alexander's trick, there is only one way (up to isotopy) to glue the two pieces, resulting in the projective space $\mathbb{RP}^3$. 

\begin{figure}
    \centering
    \includegraphics[width = .9\textwidth]{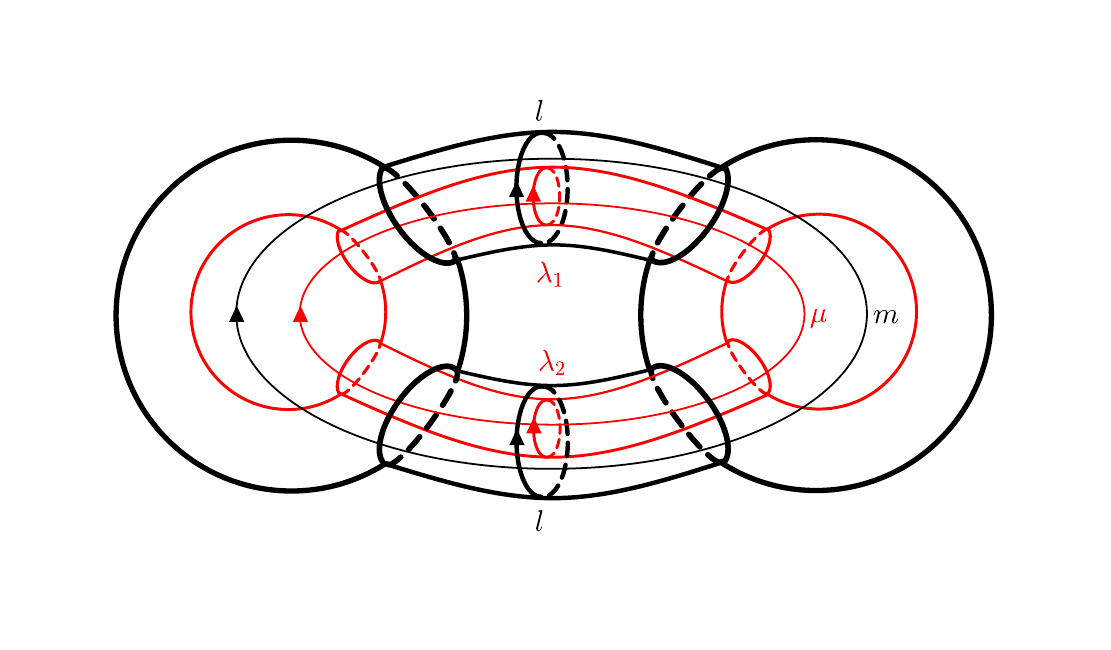}
    \caption{Mapping cylinder of the Klein bottle.}
    \label{fig:mobius_example}
\end{figure}

If $g=2$, the mapping cylinder $\mathcal C=\mathcal C(U)$ is the two sheeted covering map from the torus to the Klein bottle, so it is a twisted $I$-bundle over the Klein bottle. In Figure \ref{fig:mobius_example} there is a schematic description of $\mathcal C$ where  the external black torus represents $\partial C$; moreover, one may think of the Klein bottle as the result of  the quotient of the internal red torus by a symmetry that identifies the right sphere with the left one followed by an antipodal map on the circular fibres of the two central cylinder.  In such a way each cylinder becomes a  Möbius bands glued along the boundary circle to a two-holed sphere. If $\lambda_1$ and $\lambda_2$ denote the cores of the two Möbius bands,   the fibres in the mapping cylinder of all the points  on $\lambda_i$  will constitute another Möbius band, with its boundary lying on the boundary torus of $\mathcal C$. In other words, $\lambda_1$ and $\lambda_2$ are covered by homologous circles on the torus, which are essential. We may call  this curve the \say{longitude} of the mapping cylinder and denote the homology class of it with $l$ (oriented as in figure). There is a circle $\mu$,  unique up to isotopy, on the Klein bottle, which intersects  the M\"obius bands at exactly one point and does not bound a disk. In the mapping cylinder, the fibres of all points on $\mu$ form a M\"obius band, whose boundary lies on the boundary torus. This circle on the torus is homologically essential and intersects the longitude $l$, exactly once. We will call this curve the \say{meridian} of the mapping cylinder and denote its homology class with $m$  (oriented as  in figure). Moreover, we still denote with $\lambda_1$, $\lambda_2$ and $\mu$ the homology classes of the corresponding oriented curves. \\

\par Note that  $l$ and $m$ generate the first homology group of the boundary torus, while the homology classes of $\lambda_1$, $\lambda_2$ and $\mu$ generate the first homology group of the mapping cylinder. Let $\mathcal V$ be a solid torus and let  $l'$ and $m'$ be a standard basis for the first homology group of $\partial \mathcal V$.  When we glue the boundaries of the solid torus $\mathcal V$ and the mapping cylinder $\mathcal C$, through $\varphi: \partial \mathcal V\to \partial \mathcal C$ to obtain $M$, the images of $l'$ and $m'$ will be some integral linear combinations of  $l$ and $m$. This can be expressed as a $2\times 2$ matrix
\[
\begin{bmatrix}
 	l'&\\
 	m'
\end{bmatrix} 
=
\begin{bmatrix}
 	p_l& q_l\\
 	p_m& q_m
\end{bmatrix}
\begin{bmatrix}
 	l&\\
 	m
\end{bmatrix}
\]
where $p_l q_m - p_m q_l =-1$, describing the map induced by $\varphi $ on the first homology group and encoding all the relevant information to determine $M=\mathcal V\cup_{\varphi}\mathcal C$. Then, a part of the Mayer-Vietoris sequence for the glued manifold $M$ looks like\\

\begin{equation*}
H_1(\mathcal V\cap \mathcal C)\stackrel{f}{\longrightarrow} H_1(\mathcal V)\oplus H_1(\mathcal C) \longrightarrow H_1(M)\longrightarrow H_0(\mathcal V \cap \mathcal C )
\end{equation*}
\begin{equation*}
\langle l,m \rangle \longrightarrow \langle l'\rangle \oplus \langle \lambda_1, \lambda_2, \mu \rangle /\langle  2(\lambda_1-\lambda_2)\rangle    \longrightarrow H_1(M)\longrightarrow \mathbb{Z},
\end{equation*}
where $\langle t_1,\ldots,t_k\rangle$ denotes the free Abelian group generated by $t_1,\ldots,t_k$. The last map, sends every 1-cycle in $M$  to $0$,  thus the previous map should be surjective. Hence we have
\begin{equation*}
H_1(M)\cong H_1(\mathcal V)\oplus H_1(\mathcal C)/ G
\end{equation*}
where $G$ is the image of $H_1(\mathcal V\cap\mathcal  C)$ under the map $f$. Now if we choose $\varphi$ so that the associated matrix in homology is, 
$$ \begin{bmatrix}
 	0& 1\\
 	1& 0
\end{bmatrix}
$$
we get,
\begin{align*}
f(l)&= (l', 2\mu),\\
f(m)&= (0, 2\lambda_1).\\
\end{align*}
So we have
\begin{align*}
G&= \langle l' \rangle \oplus \langle 2\lambda_1 \rangle \oplus \langle 2\mu  \rangle ,
\end{align*} 
which implies
\begin{equation*}
H_1(M)\cong \mathbb{Z}\oplus \frac{\mathbb{Z}}{2\mathbb{Z}}\oplus \frac{\mathbb{Z}}{2\mathbb{Z}}.
\end{equation*}
So, $M$ is not a lens space or a trivial surface bundle $\Sigma\times S^1$, since the first homology group of neither of these can have a subgroup isomorphic to the Klein group $\frac{\mathbb{Z}}{2\mathbb{Z}}\oplus \frac{\mathbb{Z}}{2\mathbb{Z}}$.  \\

Since the mapping class group of the torus is quite large, we can construct several interesting subgroups of $H_1(\mathcal V)\oplus H_1(\mathcal C)$ as the image of the gluing map. As a result, the family of manifolds that can be constructed is remarkably rich. The classification of all such manifolds, however, lies beyond the scope of this paper, and we do not intend to address it here.

\section{Non-orientable plat closure}
\label{section_nonorientable}

In the previous section, we described the class of manifolds we aim to consider. In this section, we introduce the notion of a non-orientable plat closure for knots and links in such manifolds.\\

\begin{figure}
    \centering
    \includegraphics[width=0.5\linewidth]{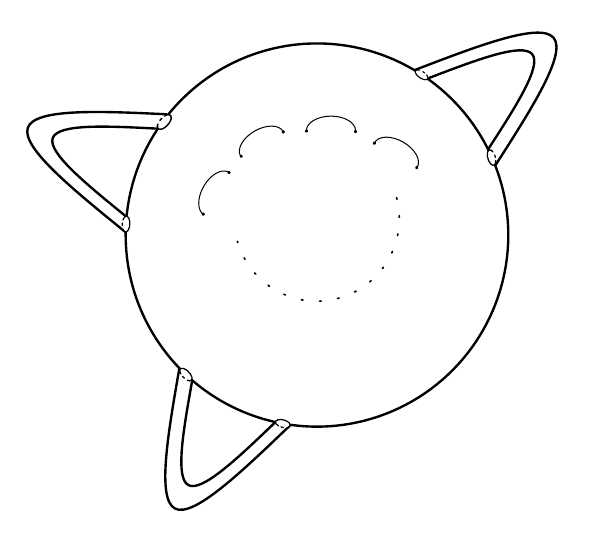}
    \caption{Capping curves inside a handlebody. }
    \label{fig:cap_handlebody}
\end{figure}

Let $M$ be a splittable manifold with (non-orientable) splitting surface $U$.  To define the concept of  non-orientable plat closure for a link in $M$, we first need to introduce the idea of capping curves in  $\mathcal H$ and in $\mathcal N(U)$. These are special \say{trivial}  arcs into which a link in non-orientable plat position will be cut by the orientable surface $\partial \mathcal H=\partial \mathcal N(U)$.   We start defining the capping curves in the handlebody. Recall that an arc $\gamma$ in a manifold $\mathcal K$  with non-empty boundary is \textit{neatly embedded}  if  $\partial \gamma\subset \partial \mathcal K$ and it is  \textit{boundary parallel} if it is neatly embedded and there exists a disk $D$, called \textit{trivializing disk}, embedded in $\mathcal K$ such that $\gamma=\gamma \cap D=\gamma\cap \partial D$ and  $\partial D\setminus \gamma\subset \partial \mathcal K$.  Capping curves in the handlebody are unlinked boundary parallel arcs, that is boundary parallel arcs with  disjoint trivializing disks (see Figure \ref{fig:cap_handlebody}). 


Let's pass to  the tubular neighborhood $\mathcal N(U)$. As capping curves we will consider curves that generalize the notion of \textit{residual} curves introduced in \cite{mishra2023plat} for the case of the projective space $\mathbb{RP}^3$.   Before defining them precisely we need a preparatory lemma.

\begin{lemma}
Let $U_g$ be a genus $g$ closed connected non-orientable surface embedded in a closed connected orientable manifold $M$.  We have,
\begin{equation*}
H_1(\mathcal N(U_g),\partial \mathcal N(U_g))\cong \underbrace{\frac{\mathbb{Z}}{2\mathbb{Z}} \oplus \frac{\mathbb{Z}}{2\mathbb{Z}}\oplus ...\oplus \frac{\mathbb{Z}}{2\mathbb{Z}}}_{g\ \textup{times}}.
\end{equation*}

\end{lemma}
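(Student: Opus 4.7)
The plan is to run the long exact sequence of the pair $(\mathcal{N}(U_g), \partial \mathcal{N}(U_g))$. Since $M$ is orientable, the closed tubular neighborhood $\mathcal{N}(U_g)$ is the twisted $I$-bundle over $U_g$ that deformation retracts onto $U_g$; its boundary is the total space of the orientating double cover $\pi: \Sigma_{g-1} \to U_g$. Both spaces are path connected, so the map $H_0(\partial \mathcal{N}(U_g)) \to H_0(\mathcal{N}(U_g))$ is an isomorphism, and the sequence collapses to yield
\[
H_1(\mathcal{N}(U_g), \partial \mathcal{N}(U_g)) \cong \mathrm{coker}\bigl(i_*: H_1(\partial \mathcal{N}(U_g)) \to H_1(\mathcal{N}(U_g))\bigr).
\]

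Next I would identify $i_*$. Via the deformation retract, the inclusion $\partial \mathcal{N}(U_g) \hookrightarrow \mathcal{N}(U_g)$ is homotopic to the covering projection $\pi$, so $i_*$ coincides with $\pi_*$ on first homology. Using the standard presentation $\pi_1(U_g) = \langle a_1, \ldots, a_g \mid a_1^2 \cdots a_g^2 \rangle$ and the realization of $\pi_1(\Sigma_{g-1})$ as the kernel of the orientation homomorphism $w_1: \pi_1(U_g) \to \mathbb{Z}/2\mathbb{Z}$, I would write down explicit generators of $\pi_1(\Sigma_{g-1})$ and pass to the abelianization $H_1(U_g) \cong \mathbb{Z}^{g-1} \oplus \mathbb{Z}/2\mathbb{Z}$ to read off the image of $\pi_*$ in terms of the chosen basis.

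The main technical step, and the principal obstacle, is then to unwind this cokernel explicitly and identify it with $(\mathbb{Z}/2\mathbb{Z})^g$. The subtlety lies in tracking how the image of $\pi_*$ interacts with the $\mathbb{Z}/2\mathbb{Z}$-torsion summand of $H_1(U_g)$ and with the free part simultaneously, so I would first analyse the small cases $g=1,2$ by hand to calibrate the bookkeeping before inducting on $g$. As an independent cross-check, Lefschetz duality for the compact orientable $3$-manifold with boundary $\mathcal{N}(U_g)$ provides the isomorphism $H_1(\mathcal{N}(U_g), \partial \mathcal{N}(U_g)) \cong H^2(\mathcal{N}(U_g)) \cong H^2(U_g; \mathbb{Z})$, which via the universal coefficient theorem can be matched against the direct cokernel computation.
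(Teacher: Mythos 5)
Your setup is sound, and it is in fact the right framework: the long exact sequence of the pair does reduce the problem to $\operatorname{coker}\bigl(i_*:H_1(\partial \mathcal N(U_g))\to H_1(\mathcal N(U_g))\bigr)$, and $i_*$ is correctly identified with $\pi_*$ for the orientating double cover $\pi:\Sigma_{g-1}\to U_g$. The gap is that the proposal stops exactly where the content lies: the ``main technical step'' of identifying the cokernel with $(\mathbb{Z}/2\mathbb{Z})^g$ is only announced, and if you carry it out with the ingredients you list it does not produce that group. Since $[\pi_1(U_g),\pi_1(U_g)]\subset\ker w_1$, the image of $\pi_*$ in $H_1(U_g)$ is exactly $\ker\bigl(w_1:H_1(U_g)\to\mathbb{Z}/2\mathbb{Z}\bigr)$, a subgroup of index two: in the basis $a_1,\dots,a_g$ it consists of all classes $\sum n_i a_i$ with $\sum n_i$ even, so it contains not only $2a_i$ but also $a_i+a_j$ (a loop running through two cross-caps is orientation-preserving, hence lifts to a loop in $\partial\mathcal N(U_g)$). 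Consequently $\operatorname{coker}(\pi_*)\cong\mathbb{Z}/2\mathbb{Z}$ for every $g$. Your own cross-check says the same: Lefschetz duality gives $H_1(\mathcal N(U_g),\partial\mathcal N(U_g))\cong H^2(\mathcal N(U_g))\cong H^2(U_g;\mathbb{Z})$, and since $H_2(U_g;\mathbb{Z})=0$ the universal coefficient theorem yields $H^2(U_g;\mathbb{Z})\cong\operatorname{Ext}(\mathbb{Z}^{g-1}\oplus\mathbb{Z}/2\mathbb{Z},\mathbb{Z})\cong\mathbb{Z}/2\mathbb{Z}$, which already for the Klein bottle ($g=2$) is incompatible with the target $(\mathbb{Z}/2\mathbb{Z})^2$. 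So the issue is not merely postponed bookkeeping: the bookkeeping, done correctly, contradicts the statement you set out to prove for $g\ge 2$ (for $g=1$ both answers agree).

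It is worth noting how this differs from the paper's argument, which works with an explicit model of $\mathcal N(U_g)$ and shows that the double cover of each M\"obius core is homologous into the boundary, i.e.\ it exhibits the relations $2a_i=0$ in the relative group and then treats these as the only relations. Your exact-sequence formulation is precisely the tool that exposes the missing relations: boundary curves covering orientation-preserving loops of $U_g$ once (such as a loop through two cross-caps) also die in $H_1(\mathcal N(U_g),\partial\mathcal N(U_g))$, collapsing the group to a single $\mathbb{Z}/2\mathbb{Z}$; equivalently, all capping arcs become homologous rel boundary. If you want to salvage a statement with $g$ independent classes, it has to come from finer invariants (for instance working in $U_g$ itself with $\mathbb{Z}/2\mathbb{Z}$ coefficients, or keeping track of isotopy rather than relative homology), not from this relative $H_1$ with integer coefficients.
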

\begin{proof}
In analogy with the description of the mapping cylinder of the orientating two-sheeted covering of the Klein bottle discussed in the previous section, we want to describe a model for $\mathcal N(U)$.  To start with, note that the connected sum of a surface $\Sigma$ with a projective plane, is homeomorphic to the surface obtained by identifying the boundary circle of $\Sigma \setminus \textup{int}(D^2)$, with $D^2$  an embedded closed disc,  with the boundary circle of a M\"obius strip.\\

If $g=1$ then  $U_1\cong\mathbb{RP}^2$ and we want to describe it  as the quotient of the 2-sphere by a $\frac{\mathbb{Z}}{2\mathbb{Z}}$ action. Consider the surface, homeomorphic to the 2-sphere, obtained by  connecting two 2-spheres with a tube, as in Figure \ref{cover1}. The shaded region  is a cylinder $C\cong S^1\times [0,1]$ whose complement is the disjoint  union of two copies of $S^2\setminus D^2$. We identify these two components  by a reflection on a plane containing the core circle $S^1\times \{\frac1 2\}$, while the action of $\frac{\mathbb{Z}}{2\mathbb{Z}}$ on $C$ is the composition of the reflection along the plane with the antipodal map in each of the circles $S^1\times \{*\}$. In this way  $C$  will become  a M\"obius strip, and the whole surface, shown in Figure \ref{cover1}, will become the surface obtained by gluing the boundary of a M\"obius strip to the boundary of $S^2\setminus D^2$, which is exactly the projective plane $U_1\cong\mathbb{R}P^2$. \\  
Now by thickening the initial surface (as shown in Figure \ref{bundle1}) and taking the quotient  on one of its boundary components, we obtain $\mathcal N(U_1)$. 
\begin{figure}
\centering
\includegraphics[scale=0.6]{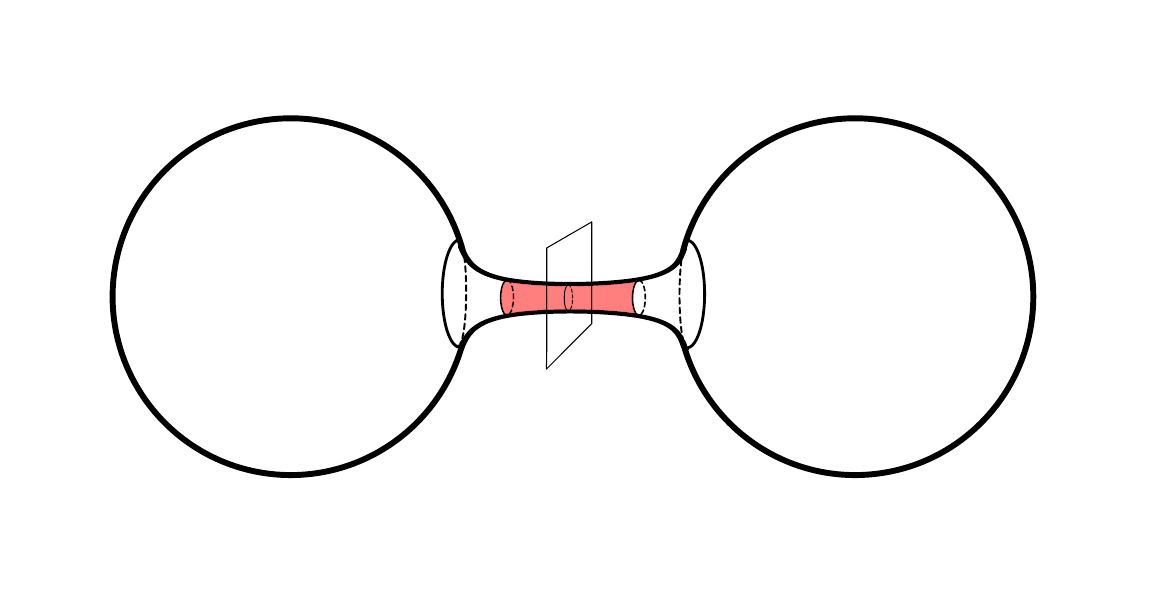}
\caption{Two spheres with connected with a tube. }
\label{cover1}
\end{figure}

\begin{figure}
\centering
\includegraphics[scale=0.6]{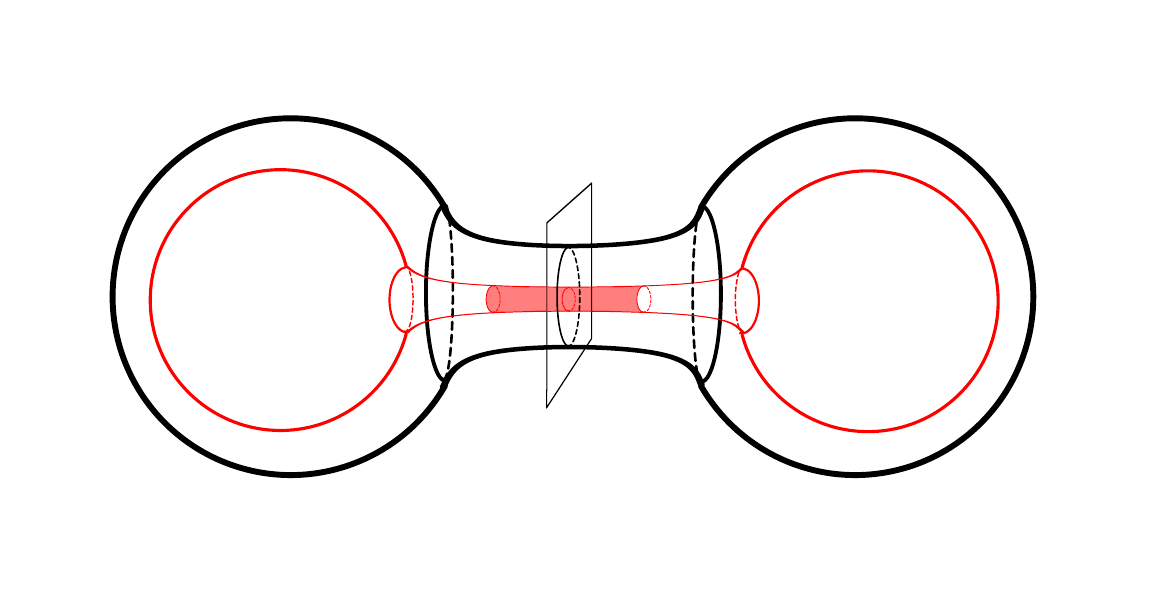}
\caption{Thickening two spheres connected by a tube.}
\label{bundle1}
\end{figure}

\begin{figure}
\centering
\includegraphics[scale=0.35]{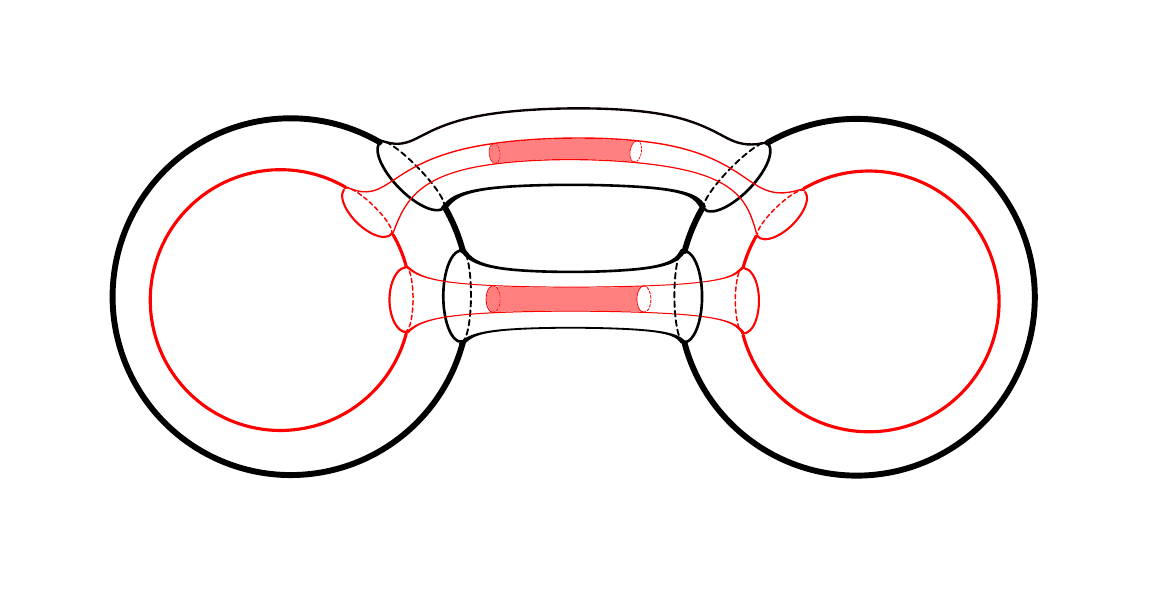}
\includegraphics[scale=0.35]{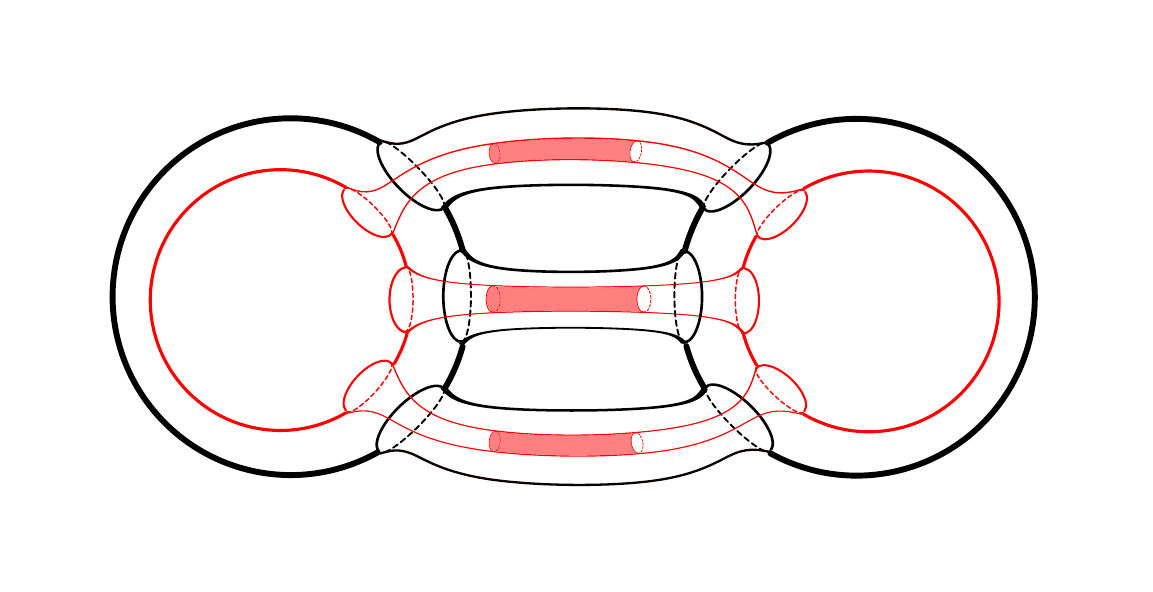}
\caption{Thickenings of the cases $g=2$ and $g=3$.}
\label{higherbundles}
\end{figure}

If in the initial surface we consider  $g$ tubes, instead of just one,  connecting the two spheres, and we repeat the previous procedure for each tube we obtain a description of $\mathcal N(U_g)$.  Figure \ref{higherbundles} shows a schematic for the cases $g=2$ and $3$. \\

Notice that, since $\mathcal N(U_g)$ retracts to $U_g$, then $H_1(\mathcal N(U_g)) \cong H_1(U_g)$. By construction, the core circles of the shaded cylinders are two sheeted covers of the cores of the corresponding M\"obius strips and each of them  generates an infinite cyclic group in  $H_1(\mathcal N(U_g))$, while the sum of all the core circles will generate a $\frac{\mathbb{Z}}{2\mathbb{Z}}$ component. So, we may choose the cores of $g-1$ M\"obius strips and the sum of all of the cores as the generating cycles of
\begin{equation*}
H_1(\mathcal N(U_g))\cong H_1(U_g)\cong\mathbb{Z}^{g-1}\oplus \frac{\mathbb{Z}}{2\mathbb{Z}}.
\end{equation*}
In the relative homology group $H_1(\mathcal N(U_g), \partial   \mathcal N(U_g))$,  the cores of all of the cylinders are trivial cycles, since they are homologous to a circle in $\partial \mathcal N(U_g)$ as shown in Figure \ref{cyclecancel}. 
Thus every generator of $H_1(\mathcal N(U_g))$ becomes a generator of a $\frac{\mathbb{Z}}{2\mathbb{Z}}$,  and hence
\begin{equation*}
    H_1(\mathcal N(U_g),\partial \mathcal N(U_g))\cong \frac{\mathbb{Z}}{2\mathbb{Z}}\oplus \frac{\mathbb{Z}}{2\mathbb{Z}}...\oplus \frac{\mathbb{Z}}{2\mathbb{Z}}.
\end{equation*}
\begin{figure}
\centering
\includegraphics[scale=0.6]{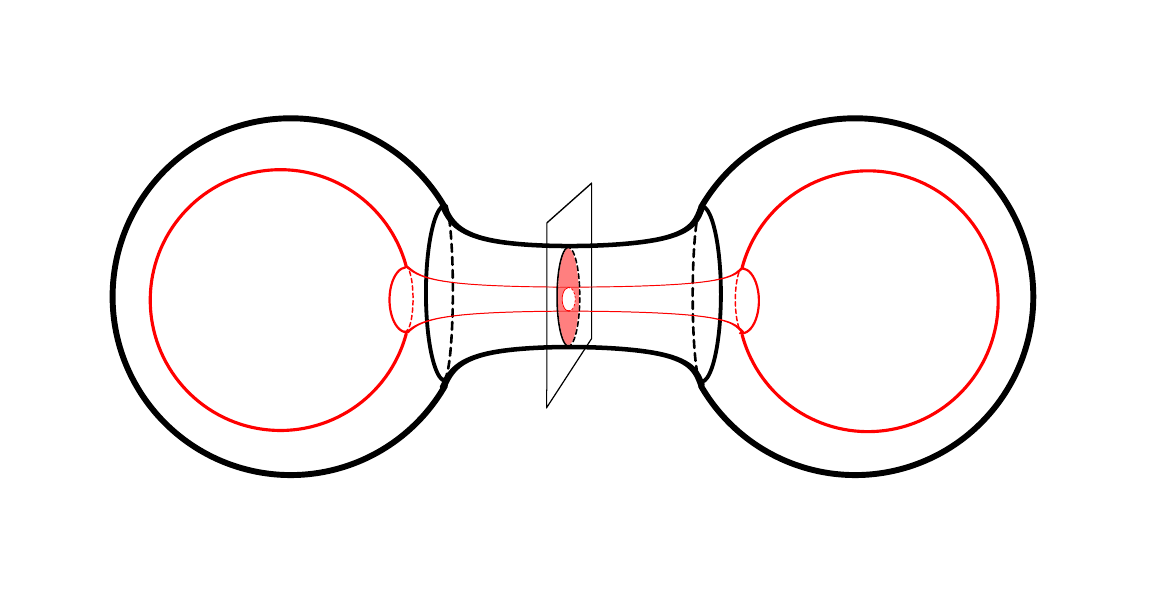}
\caption{Cancellation of the cores of the cylinders.}
\label{cyclecancel}
\end{figure}

\end{proof}

Figure \ref{capcur} represents  one of the thickened tubes connecting two of the spheres depicted in Figure \ref{higherbundles}. As described in the previous proof, the shaded cylinder will be identified into a M\"obius strip in order to construct $N(U_g)\cong\mathcal C$. Then the points  $Q$ and $Q'$ will be identified and  the line segment from $P$ to $Q$ and the from $Q'$ to $R$ will be joined to one curve connecting the points $P$ to $R$. The points $P$ and $R$ lie on the boundary of $\mathcal N(U_g)$, and thus this curve represents a cycle in $H_1(\mathcal N(U_g),\partial \mathcal N(U_g))$. It is obvious that this cycle is not homologically trivial and  it is a generator of this homology group. 
We will call each such curve as a \say{capping curve} in $\mathcal C(U_g)\cong \mathcal N(U_g)$. By a collection of capping curves, we always refer to a set of mutually unlinked such curves: that is, for each couple of capping curves  there exists a parametrization of $\mathcal N(U_g)$ as a twisted bundle over $U_g$ so that the two capping curves are fibres. \\

\begin{figure}
\centering
\includegraphics[width = .7\textwidth]{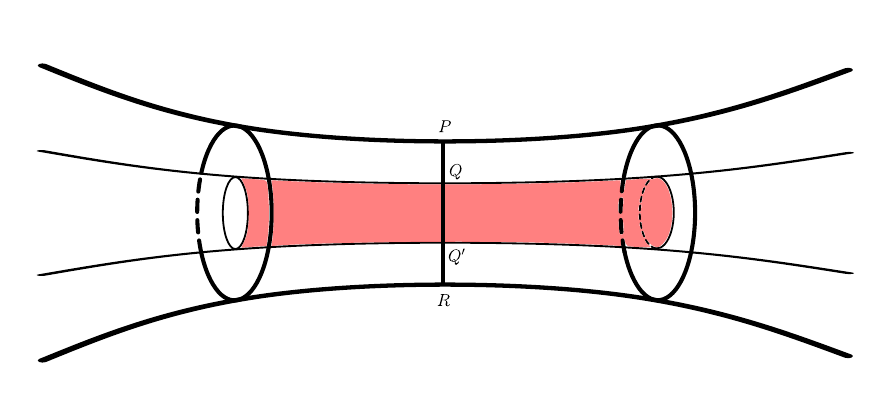}
\caption{A capping curve in $\mathcal C$.}
\label{capcur}
\end{figure}

Now we are ready to give the definition of non-orientable plat closure in $M$.

\begin{definition}
Let $M$ be a closed connected orientable 3-manifold containing a non-orientable embedded surface  $U_g$ of genus $g$ such that $M\setminus \mathcal N(U_g)$ is an handlebody $\mathcal{H}_{g-1}$. By  thickening the surface $\Sigma_{g-1}=\partial \mathcal{H}_{g-1}=\partial \mathcal N(U_g)$ we can decompose $M$ as 

$$M=\mathcal{H}_{g-1}\cup \Sigma_{g-1}\times I\cup \mathcal N(U_g).$$ 
Let $\beta\in B_{g-1,n}$ be a braid in  $\Sigma_{g-1}\times I$. Any link formed by joining the ends of $\beta$ to some collection of capping curves in $\mathcal H_{g-1}$ and $\mathcal N(U_g)$ will be called the \textit{non-orientable plat closure} of the braid $\beta$ and will be denoted as $\hat{\beta}$. 
\end{definition}

\par As we will show in Sections \ref{section_lens_spaces} and \ref{surfacebudlesection}, lens spaces of the type $L(2k,p)$ and trivial circle bundles $\Sigma_g\times S^1$ are splittable manifolds. This immediately raises a question.\\

\noindent\textit{Is every link in a splittable manifolds $M$, isotopic to the non-orientable plat closure of some braid?}\\

\par In  \cite{cattabriga2018markov} the authors answer a similar question for the case of orientable plat closure, that is, corresponding to the decomposition 
$$M=\mathcal{H}\cup \Sigma\times I \cup \mathcal{H}', $$ 
where $\mathcal{H}$ and $\mathcal{H}'$ are both handlebodies with the same genus and $\Sigma$ is Heegaard surface for $M$. In \cite{mishra2023plat} the authors study the case of 
$$\mathbb{RP}^3=B^3\cup S^2\times I\cup \mathcal N(\mathbb{RP}^2),$$
with $B^3$  a ball and $S^2$ its boundary sphere. 
We answer this question completely in the following theorem.

\begin{theorem}
Every link in a splittable 3-manifold $M$ is isotopic to the non-orientable plat closure of a braid in $\Sigma_{g-1}$. 
\end{theorem}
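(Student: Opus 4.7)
The plan is to start with an arbitrary smooth link $L\subset M$ and progressively deform it into plat position, using isotopies that push every complication into the collar $\Sigma_{g-1}\times I$. A preliminary transversality argument places $L$ in general position with respect to $\partial\mathcal H_{g-1}=\partial\mathcal N(U_g)$, so that $L$ meets this surface in an even number $2n$ of points and is cut into three tangles, one in each piece of the decomposition. Throughout the argument, any move that slides endpoints of the side tangles along $\Sigma_{g-1}$ can be realised by a corresponding isotopy inside $\Sigma_{g-1}\times I$; its cost is paid in extra crossings in the middle piece, which is exactly the kind of flexibility that a plat representation is designed to absorb.

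On the handlebody side, I would show that $L\cap \mathcal H_{g-1}$ can be isotoped to a collection of mutually unlinked boundary-parallel arcs. This is the Hilden--Birman plat reduction adapted to a genus $g-1$ handlebody: fix a graph spine of $\mathcal H_{g-1}$, push each arc of the tangle into a collar of the boundary using the spine as a guide, and then use endpoint slides along $\Sigma_{g-1}$ to separate their trivializing disks. All crossings introduced in this step lie in $\Sigma_{g-1}\times I$ and will contribute to the final braid.

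On the non-orientable side, I would use the twisted $I$-bundle model of $\mathcal N(U_g)$ from the preceding lemma. Since $\mathcal N(U_g)$ deformation retracts onto $U_g$, each arc of $L\cap \mathcal N(U_g)$ can first be pushed close to $U_g$, and then analysed relative to the description of $\mathcal N(U_g)$ obtained by thickening the two spheres joined by $g$ tubes and quotienting the shaded cylinders to Möbius bands. A capping curve is, up to isotopy, an $I$-fibre that passes through the core of exactly one Möbius band. Working tube by tube, I would use isotopies supported on one thickened tube at a time to convert each arc into this standard form, paying for endpoint slides by further braiding in $\Sigma_{g-1}\times I$, and finally separating the resulting arcs into a mutually unlinked collection, so that a common parametrization of $\mathcal N(U_g)$ as a twisted bundle realises them simultaneously as fibres. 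The fact, established in the lemma, that each core is $2$-torsion in $H_1(\mathcal N(U_g),\partial\mathcal N(U_g))$ is what guarantees that the winding of an arc around the cores can always be traded for pairs of crossings in the collar.

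Finally, once both side tangles are in capping-curve form, the middle tangle $L\cap(\Sigma_{g-1}\times I)$ is straightened into a geometric braid on $n$ strands transverse to the $I$-fibration, by applying the Alexander-type theorem for surface braids in a thickened closed orientable surface. The resulting braid $\beta\in B_{g-1,n}$ has non-orientable plat closure $\hat\beta$ isotopic to $L$. The main obstacle I expect is the non-orientable step: unlike the classical handlebody or $\mathbb{RP}^3$ cases, one must show simultaneously that the arcs can be straightened to fibres and that these fibres can be chosen to be mutually unlinked, and this requires a careful inductive use of the tube-by-tube structure of the $I$-bundle together with the homological cancellation depicted in Figure~\ref{cyclecancel}.
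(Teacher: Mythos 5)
Your overall strategy is the same in spirit as the paper's (push all complications into the collar $\Sigma_{g-1}\times I$ and reduce the two side tangles to capping curves), but two of your key steps are asserted rather than proved, and one of them is false as stated. First, on the non-orientable side you claim that each arc of $L\cap\mathcal N(U_g)$ can be isotoped, tube by tube, into a capping curve, and that the $2$-torsion of the cores in $H_1(\mathcal N(U_g),\partial\mathcal N(U_g))$ guarantees that any winding can be traded for crossings in the collar. Homology cannot carry that weight: the relative class of an arc is invariant under isotopies that keep it inside $\mathcal N(U_g)$, so a boundary-parallel arc (trivial class) can never be straightened \emph{inside} $\mathcal N(U_g)$ to a capping curve (nontrivial class); conversely, an arc with a local knot, or one wrapping a core several times, has a simple relative class but is not isotopic to an $I$-fibre. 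What actually has to be shown --- and what the paper establishes by an explicit slide (Figure \ref{slidenug_total}) --- is that an arbitrary arc can be pushed \emph{partly out} of $\mathcal N(U_g)$, so that its intersection with $\mathcal N(U_g)$ becomes a possibly different number of capping curves while the remainder becomes boundary-parallel arcs in the collar; the number of intersection points of $L$ with the splitting surface is not preserved, and your ``convert each arc'' formulation hides exactly this.

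Second, your closing step does not close. After both side tangles are in capping-curve position, the middle tangle will in general still contain strands with interior maxima and minima (for instance a strand joining two endpoints on $\Sigma_{g-1}\times\{0\}$ must have a maximum), and no Alexander-type theorem for tangles in $\Sigma_{g-1}\times I$ rel endpoints can remove these: critical points, not crossings, are the obstruction to being a braid. Eliminating them forces you to push caps up into $\mathcal N(U_g)$ and cups down into $\mathcal H_{g-1}$, creating new capping curves and new endpoints on the splitting surface, i.e.\ it changes the side tangles you have just normalized. This cap/cup sliding with respect to the height function on the collar is the actual engine of the paper's proof (Figures \ref{slidehg_total} and \ref{slidetg}, plus the inflection-point modification), and it is precisely the mechanism missing from your sequential plan ``fix $\mathcal H_{g-1}$, fix $\mathcal N(U_g)$, then braid the middle.'' To repair the argument you should organize it around the critical points of the projection $h:\Sigma_{g-1}\times I\to I$ from the start, showing that each slide terminates with monotone (braided) strands in the collar, unlinked boundary-parallel arcs in $\mathcal H_{g-1}$, and capping curves in $\mathcal N(U_g)$; your handlebody paragraph and your use of the twisted $I$-bundle model are compatible with this, but by themselves they do not complete the proof.
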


\begin{proof}
Consider the splitting $M=\mathcal H_{g-1}\cup \Sigma_{g-1}\times I\cup \mathcal N(U_g)$. Parametrize $I$ so that  $\Sigma_{g-1}\times {0}=\partial \mathcal H_{g-1}$ and $\Sigma_{g-1}\times {1} = \partial \mathcal N(U_g)$. Also let $h: \Sigma_{g-1}\times I \to I$ be the natural projection. Given a link, $L\subset M$, it may intersect the three regions in this splitting. \\

\begin{figure}
\centering
\includegraphics[width = \textwidth]{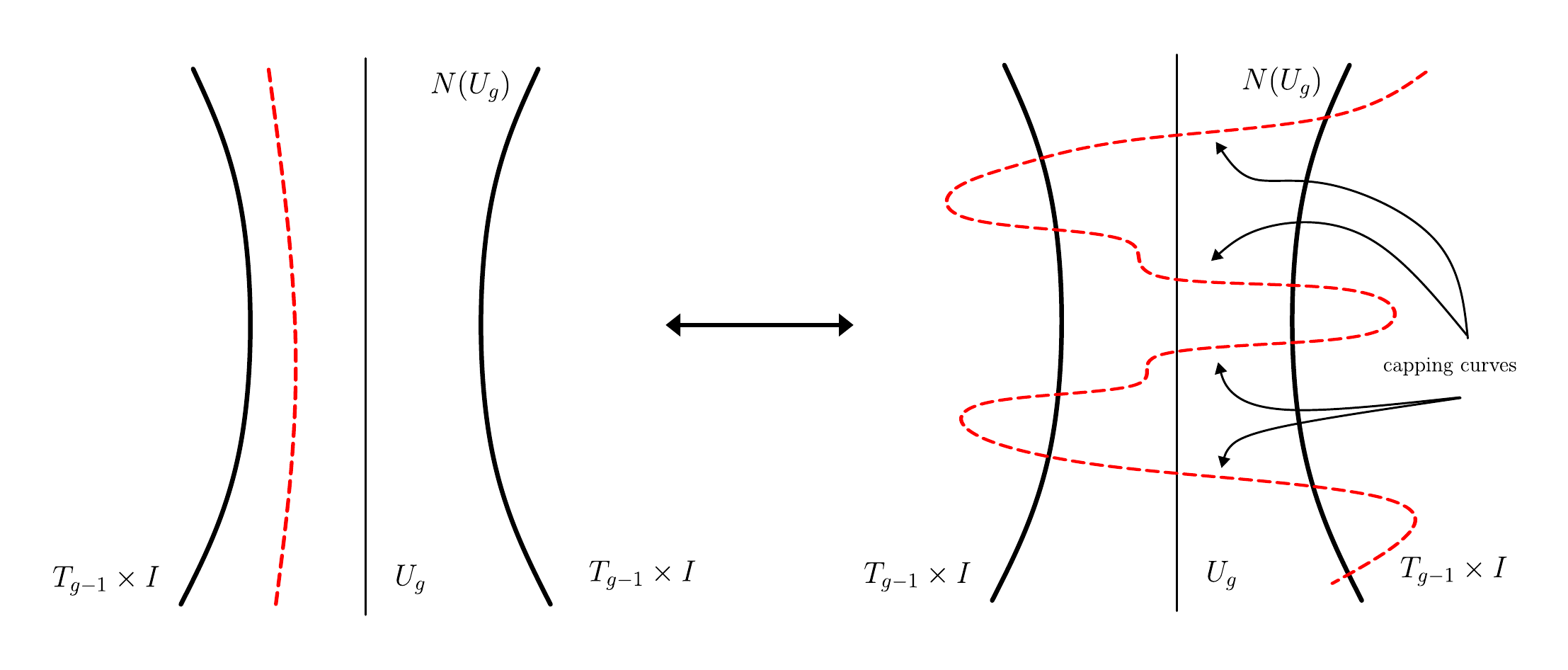}\\
\caption{Sliding an arbitrary curve to obtain capping curves in $\mathcal N(U_g)$ and boundary parallel curves in $\Sigma_{g-1}\times I$.}
\label{slidenug_total}
\end{figure}

\par As depicted in Figure \ref{slidenug_total} every boundary parallel arc $\gamma\subset \mathcal N(U_g)$ can be isotopically slided to a curve $\gamma'\subset M$, so that $\gamma' \cap \mathcal N(U_g)$ is a collection of capping curves and $\gamma'\cap \Sigma_{g-1}\times I$ is a collection of boundary parallel arcs. \\

\par The curves inside $\Sigma_{g-1}\times I$ can be classified as follows. If a curve contains no critical  points of $h$, we say that it is monotonic. If a monotonic curve has its ends on $\partial \Sigma_{g-1}\times I$ then, it will be called braided. A generic curve may contain many critical points. We may divide any such curve into several parts, by introducing some points in between so that, between any two adjacent points, the curve is monotonic or it has exactly one critical point. As depicted in Figure \ref{slidehg_total}, we may slide any minima point into $\Sigma_{g-1}\times I \cup \mathcal H_{g-1}$ so that the part inside $\mathcal H_{g-1}$ is a boundary parallel arc and the part inside $\Sigma_{g-1}\times I$ consists of two monotonic arcs.\\

\begin{figure}
\centering
\includegraphics[width = .5\textwidth]{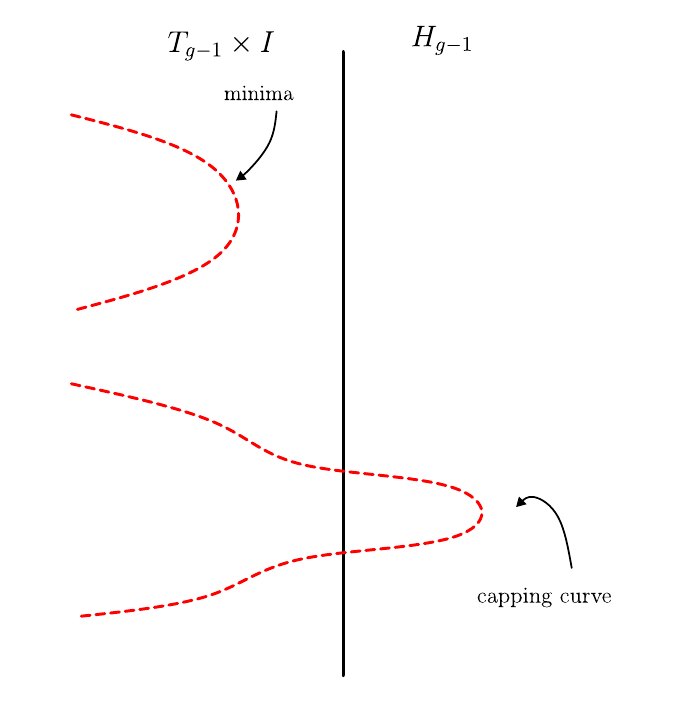}\\
\caption{Sliding minima points in $\Sigma_{g-1}\times I$ into $\mathcal H_{g-1}$.}
\label{slidehg_total}
\end{figure}

\par  We may slide any maxima point  to $\mathcal N(U_g)$ so that we get two monotonic arcs in $\Sigma_{g-1}$ and a boundary parallel arc in $\mathcal N(U_g)$. Now as described above, we may again slide that boundary parallel arc in $\mathcal N(U_g)$ so that we get a collection of capping curves in $\mathcal N(U_g)$ and some boundary parallel arcs in $\Sigma_{g-1}\times I$ (see Figure \ref{slidetg}). Notice that since each such boundary parallel arc in $\Sigma_{g-1}\times I$ is near $\partial \mathcal N(U_g)= \Sigma_{g-1}\times {1}$, we may assume that they contain exactly one minima point. And then we may slide each of these so that we get two braided curves in $\Sigma_{g-1}\times I$ and a boundary parallel curve in $\mathcal H_{g-1}$. If there is a point of inflection, then we may modify that part of the curve so that we get a maxima and minima. \\

It is clear that by sliding curves if needed, we may assume that all the boundary parallel arcs inside $\mathcal H_{g-1}$ are mutually unlinked. That is, we may assume that we always have a collection of capping curves in $\mathcal H_{g-1}$ after sliding the critical  points in $\Sigma_{g-1}\times I$.  \\

\begin{figure}
\centering
\includegraphics[width = .5\textwidth]{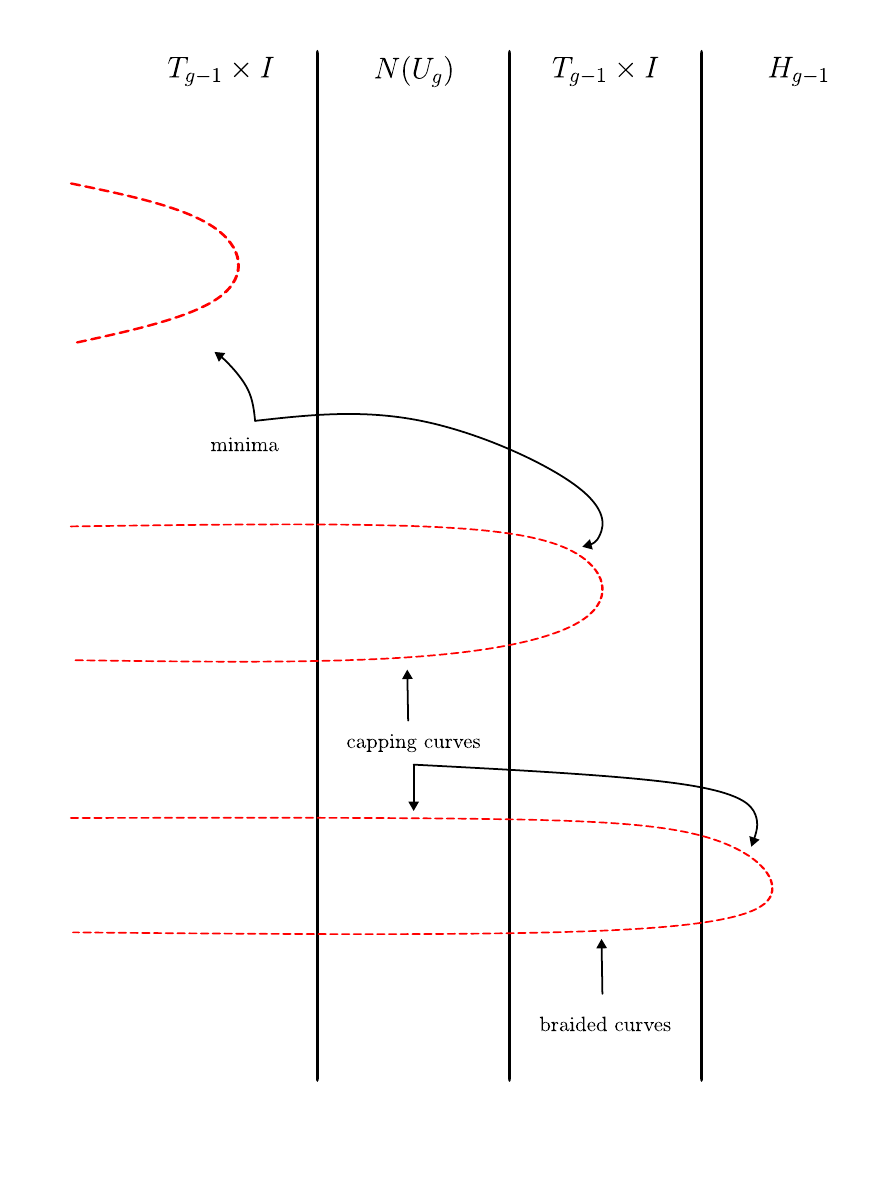}\\
\caption{A maxima point in $\Sigma_{g-1}\times I$. Sliding it to obtain capping curves in $\mathcal N(U_g)$ and a minima point in $\Sigma_{g-1}\times I$, then sliding the minima point from $\Sigma_{g-1}\times I$ to obtain braided curves in $\Sigma_{g-1}\times I$ and a capping curve in $\mathcal H_{g-1}$.}
\label{slidetg}
\end{figure}

\par Thus we can isotopically modify any curve in $\Sigma_{g-1}\times I$ into a collection of capping curves in both $\mathcal H_{g-1}$ and $\mathcal N(U_g)$ and braided curves in $\Sigma_{g-1}\times I$. Notice that now the part inside $\Sigma_{g-1}\times I$ is a braid and the whole link looks like its non-orientable plat closure.

\end{proof}

\section{Lens spaces} \label{section_lens_spaces}
In this section we deal with lens spaces recalling results from  in \cite{bredon1969non,rubinstein1978one,end1992non} and working out explicitly some families of examples.\\

The embeddability of non-orientable surfaces in  lens spaces was originally studied in \cite{bredon1969non,end1992non}. The authors, using the fact that $H^1(L(p,q);\mathbb Z_2)\cong \textup{Hom}(\mathbb Z_p,\mathbb Z_2)$ is trivial for $p$ odd, proved that  if a lens space $L(p,q)$ contains a non-orientable surface then $p$ is even. Let $p=2k$ and 

\[\frac{2k}{q} = a_0 + \frac{1}{a_1 + \frac{1}{a_2 + \frac{1}{a_3 + \dots}}}\]

be a continued fraction representation of \(\frac{2k}{q}\). Moreover, we  set  $b_0 =a_0 $ and for $i$ a positive integer 

\begin{equation}\label{eqn:N_2k_q}
\left\{ \begin{array}{ll}
b_i = a_i &\text{ if } b_i\neq a_{i-1} \text{ or } \sum_{j = 0}^{i-1} b_j \text{ is odd.}\\
b_i = 0 &\text{ if } b_i = a_{i-1} \text{ and } \sum_{j = 0}^{i-1} b_j \text{ is even.}
\end{array}
\right.
\end{equation}
The following result (\cite[Section 6]{bredon1969non}) describes the non-orientable surfaces embedded in $L(2k,q)$. 
\begin{theorem}[\cite{bredon1969non}]
A  non-orientable closed surface $U_g$ of genus $g$ embeds in $L(2k,q)$ if and only if  $g=N(2k,q)+2h$, with $h\geq 0$, where
\begin{equation}N(2k,q) = \frac{1}{2} \sum_{i = 0}^{n} b_i.\label{formula:genus}\end{equation}
\end{theorem}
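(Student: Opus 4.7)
The plan is to prove both directions of the \say{if and only if}: to construct, for every $h\geq 0$, an embedded non-orientable surface of genus $N(2k,q)+2h$, and to show that no embedding of lower genus or wrong parity exists. The basic setup is to view $L(2k,q)$ via its genus-one Heegaard splitting $V_1 \cup_\psi V_2$, where $\psi$ identifies the meridian of $V_2$ with a $(2k,q)$-curve on $\partial V_1$; the continued fraction expansion of $2k/q$ then provides a natural recipe for decomposing this attaching data into successive layers of bands.

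For the existence direction, I would build the surface $U$ inductively using the continued fraction algorithm. Start with a meridian disk $D\subset V_1$ whose boundary meets the gluing curve in $2k$ points. Those $2k$ points are the endpoints of arcs in $V_2$, which I would organize into bundles of parallel strips of widths corresponding to the partial quotients $a_i$. At step $i$, each $a_i$-bundle either wraps into a chain of Möbius bands (contributing $a_i$) or, when the previous and current layers can be paired into annular collars, is absorbed away (contributing $0$). This dichotomy matches precisely the definition of $b_i$: the clause $b_i=0$ records two successive layers that close off into cylinders which can be pushed into $V_2$, while the clause $b_i=a_i$ records layers that genuinely contribute Möbius bands. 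Assembling $D$ together with the remaining Möbius bands produces the desired embedded closed non-orientable surface.

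A direct Euler characteristic computation, together with the count of independent Möbius cores in $H_1$, should then give non-orientable genus $N(2k,q)=\frac{1}{2}\sum_i b_i$. For arbitrary $h\geq 0$, one attaches $h$ trivial $1$-handles (equivalently, connect-sums with $h$ tori), each raising the non-orientable genus by $2$ while preserving embeddedness and non-orientability; this produces surfaces of every genus $N(2k,q)+2h$.

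The converse is the subtler half. I would proceed via a $\mathbb{Z}/2\mathbb{Z}$-homological argument: any non-orientable embedded surface in $L(2k,q)$ represents a non-trivial class in $H_2(L(2k,q);\mathbb{Z}/2\mathbb{Z})\cong\mathbb{Z}/2\mathbb{Z}$, so all such surfaces are homologous mod $2$. Using compressions inside the irreducible manifold $L(2k,q)$, an arbitrary embedded non-orientable surface can be brought into a standard form matching the continued-fraction model, and each compression step changes the non-orientable genus by an even amount; this yields simultaneously the lower bound $g\geq N(2k,q)$ and the parity condition $g\equiv N(2k,q)\pmod 2$. The main obstacle I foresee is the careful bookkeeping of the cancellation rule defining $b_i$: verifying precisely when two consecutive layers close into cylinders that can be isotoped off, and synchronizing this with the parity of $\sum_{j<i} b_j$, requires an inductive argument on the depth of the continued fraction whose combinatorics must be tracked with some care.
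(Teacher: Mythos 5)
This statement is quoted from Bredon--Wood (and the embedding construction is recalled from End); the paper itself gives no proof, only the explicit model: a non-separating involution $\tau$ of $\mathbb{Z}_{2k}$ produces chords in $D^2\times\{0\}$, $D^2\times\{1\}$ joined by helices, spanning disks give a surface $F_g\subset D^2\times S^1$ with $\partial F_g=t(2k,q)$ and $g=k+1-c(\tau)$, and the minimal genus is obtained by maximizing $c(\tau)$. Measured against what an actual proof requires, your proposal has genuine gaps in both directions. In the existence direction, the ``layers of strips that either wrap into M\"obius bands or get absorbed'' picture is never tied to a concrete construction: the real content is the combinatorial step showing that the maximal number of closed curves $c(\tau)$ over all non-separating involutions satisfies $k+1-\max c(\tau)=\frac12\sum_i b_i$, i.e. that the cancellation rule defining the $b_i$ (including the parity clause on $\sum_{j<i}b_j$) exactly reflects which partial quotients of $2k/q$ can be ``absorbed.'' You acknowledge this bookkeeping as an obstacle, but it is not a detail --- it is the theorem's formula, and your sketch gives no mechanism for proving it. The $+2h$ part (connected sum with tori, $U_g\#T^2\cong U_{g+2}$) is fine.

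The converse is where the proposal would fail as written. The $\mathbb{Z}/2$-homology observation only shows all embedded non-orientable surfaces are homologous mod $2$; it gives no genus bound. The step ``using compressions, an arbitrary embedded non-orientable surface can be brought into a standard form matching the continued-fraction model'' assumes precisely what must be proved: that a minimal (incompressible) one-sided surface in $L(2k,q)$ has genus $N(2k,q)$. Bredon--Wood establish this by an induction on the length of the continued fraction, relating surfaces in $L(2k,q)$ to surfaces in smaller lens spaces, and none of that reduction appears in your outline. Moreover, the claim that each compression changes the non-orientable genus by an even amount needs care: compressing raises $\chi$ by $2$, but the result may be disconnected (a separating compressing curve), and one must argue that discarding the $\mathbb{Z}/2$-null-homologous components (necessarily the orientable ones, since orientable surfaces in a lens space are mod-$2$ null-homologous) still changes the genus of the surviving non-orientable piece by an even number; the parity obstruction (e.g.\ no Klein bottle in $\mathbb{RP}^3$) is true but not a formal consequence of the homology class alone. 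As it stands, the proposal is a plausible reading of why the formula should hold, not a proof.
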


A minimal genus embedding has  been explicitly described in \cite{end1992non} in the following way. 

Present $L(2k,q)$ via the Heegaard splitting $\mathcal V_1\cup_{\psi_{k,q}}\mathcal V_2$, where $\mathcal V_1,\mathcal V_2$ are solid tori and $\psi_{k,q}$ is a gluing homemorphism between their boundaries that sends a meridian curve of $\mathcal V_1$ into the torus knot $t(2k, q)\subset \partial \mathcal V_2$. A family of embeddings, called Bredon-Woods embeddings, of $U_g$ into $L(2k,q)$ are constructed embedding  the compact surface  $F_g$, obtained removing the interior of a closed disk from  $U_g$,  into the solid torus $\mathcal V_1=D^2 \times S^1$, so that  $\partial F_g=t(2k, q) \subset S^1 \times S^1$, and gluing $F_g$ with a meridian disk of $\mathcal V_2$, via $\psi_{k,q}$. Let us describe how to construct embeddings of $F_g$ into $V_1\cong D^2\times S^1$.

Let $\xi = \textup{exp}(\frac{2\pi i}{p})\in S^1$. An involution $\tau$ of $\mathbb{Z}_{2k}$ is called \textit{non separating} if it has no fixed points, and if two the line segments $\overline{\xi^{a}, \xi^{\tau(a)}}$ and $\overline{\xi^{b}, \xi^{\tau(b)}}$ in $D^2$ are disjoint, for all $a,b\in\mathbb Z_{2k}$, unless $a=b$ or $a=\tau(b)$. 

Consider the pair  \((D^2 \times I, \partial(D^2 \times I)) \cong (D^3, S^2)\) and recall that for any (finite) system of pairwise non-intersecting simple closed curves on \(S^2\) we can  find mutually disjoint disks in \(D^3\) spanned by these curves. On \(\partial(D^2 \times I)\) take  the system of simple closed curves obtained by composing the line segments $\overline{\xi^{a}, \xi^{\tau(a)}} \times \{0\}$, $\overline{\xi^{a}, \xi^{\tau(a)}} \times \{1\}$ and the helices \(\{(\xi^{a+qt}, t) | t \in I\}, a = 1, \dots, 2k\). Let \(c = c(\tau)\) be the number of such simple closed curves and consider  $c$ mutually disjoint disks spanned by these curves in \(D^2 \times I\). Identifying \(D^2 \times \{0\}\) with \(D^2 \times \{1\}\) we obtain, after smoothing if necessary, a non-orientable, properly embedded compact surface \(F_g\) in \(D^2 \times S^1\), spanned by the torus knot, i.e. \(\partial F_g = t(2k,q)\). By \cite[Proposition 2.5]{end1992non} \(F_g\) has to be non-orientable, hence \(\chi(F_g) = 1-g\). Since \(F_g\) is obtained from \(c\) mutually disjoint 2-disks by identifying \(2k\) intervals on their boundaries pairwise, we get \(\chi(F_g) = c - k\). Therefore, the genus \(g\) of \(F_g\) is  given by $g = k + 1 - c$.\\

By \cite{bredon1969non} the minimal \(g\), obtained by varying \(\tau\), is also the minimal \(g\) for which \(U_g\) embeds into \(L(2k,q)\) so it is $N(2k,q)$. It is also the minimal \(g\), for which \(F_g = U_g \setminus \textup{int}(D)\) properly embeds into \(D^2 \times S^1\), with \(\partial(F) = t(2k,q)\).  Clearly, in order to get \(N(2k,q)\) one has to look for the maximal \(c(\tau)\). As proved in \cite{end1992non} it is always possible to obtain the minimal genus embedding as a Bredon-Wood embedding as described above and as proved in \cite{rubinstein1978one} the Bredon-Wood embedding induces a one-sided splitting of the lens space.           In the following two subsections we focus on two particular families of lens spaces depending on one parameter and we describe explicitly the embedded non-orientable surface as well as the handlebody structure of its complement.

\subsection{The case of \texorpdfstring{$L(2k,1)$}{TEXT}}\label{2k1}
Let $k>0$ be an integer, we have $2k/1=2k$, so by Formula \eqref{formula:genus},  the minimal genus surface has genus $k$. In order to understand the Bredon-Wood embedding of $U_k$ in  $L(2k,1)$ we need to find a $\tau$ function such $k=k+1-c(\tau)$, that is $c(\tau)=1$.  For $h=1,\ldots,2k$, label $h$ the point $\xi^h = \textup{exp}(\frac{2\pi h i}{2k})$ on $S^1$ and define $\tau$ as follows 
(see Figure \ref{fig:2k-1}) 
\[\tau(2i) = 2i-1, \quad \tau(2i-1) = 2i, \qquad i = 1, \dots, k.\]

\begin{figure}
   \centering
   \includegraphics[width = .4\textwidth]{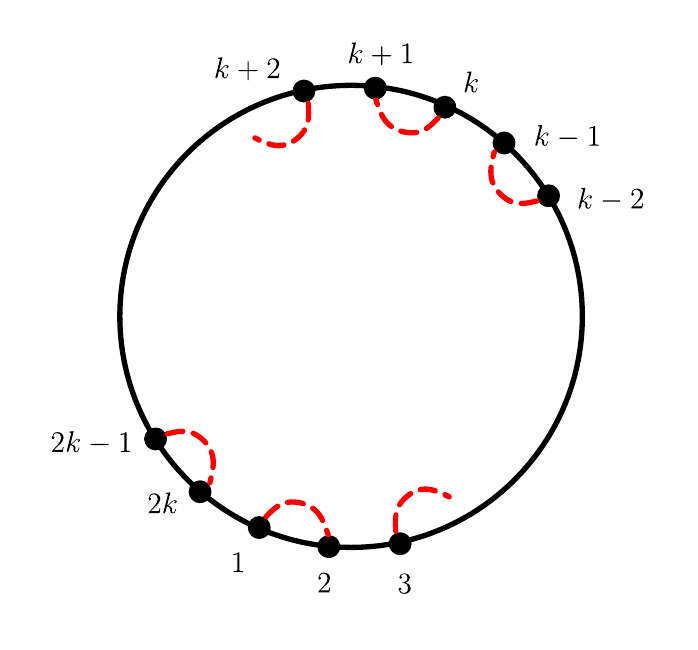}
   \caption{A minimal genus \(\tau\) function for the case of \(L(2k,1)\), for $k$ odd. When $k$ is even the label $k-2$ is replaced with $k-1$ and so on.}
   \label{fig:2k-1}
\end{figure}

The line segments  in $D\times\{0\}$ and $D\times\{1\}$ are the dashed arcs depicted in Figure \ref{fig:closed_tau_2k} having as endpoints the points  labeled with \(2i-1\) and   \(2i\), for  \(i = 1, \dots, k\).  Since $q=1$, each dotted-dashed helix connects two consecutive line segments, therefore all the dashed arcs and the dotted-dashed helices give rise to a single closed curve. 

\begin{figure}
    \centering
    \includegraphics[width = .4\textwidth]{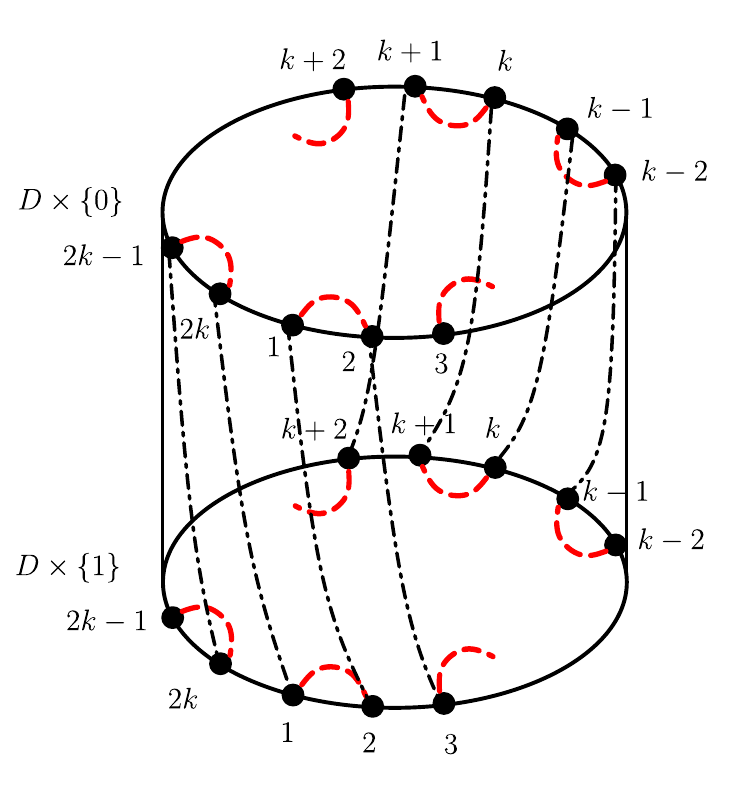}
    \caption{The single curve of a minimal genus \(\tau\) function for the case of \(L(2k,1)\), for $k$ odd. When $k$ is even the label $k-2$ is replaced with $k-1$ and so on.}
    \label{fig:closed_tau_2k}
\end{figure}

Identifying the boundary disks of the solid cylinder $D^2\times I$ shown, for the case $k=4$, in Figure \ref{fig:closed_tau_2k},  produces the solid torus $\mathcal V_1$. The blue surface in the solid cylinder will identify to form the non-orientable surface $F_k$ neatly embedded in $\mathcal V_1$. The surface consists of $k$ vertical rectangular regions and an horizontal region identified along the line segments in their boundaries.  The boundary of $F_k$ is  the  curve $\gamma$ consisting of the fatter blue lines on $\partial D^2 \times I$ and  is a $t(2k,1)$ torus knot on $\partial \mathcal V_1\cong S^1\times S^1$.  Notice that also the dotted lines  will form a parallel copy of $\gamma$, which also is an $(2k,1)$  torus knot in $\partial V_1$. Let's denote this parallel  curve with $\gamma'$.

\begin{figure}
    \centering    
    \includegraphics[width = .6\textwidth]{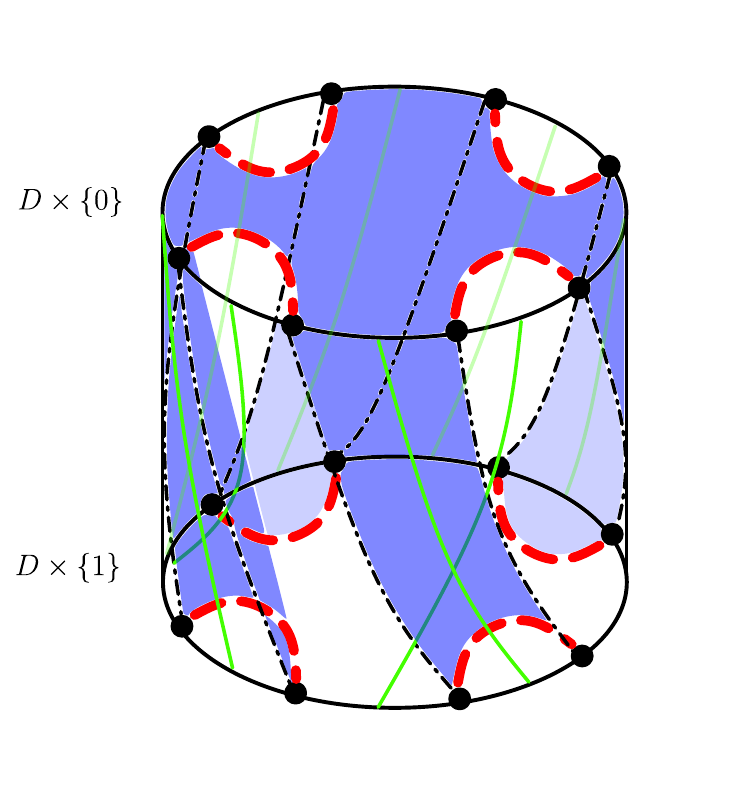}
    \caption{The surface $F_k$ (in blue), together with the core of the handles of $\mathcal V_1\setminus \mathcal N(F_k)$ (in green), in the case $k=4$. }    
    \label{fig:handlebody_2k_1}
\end{figure}

It follows from Theorem \ref{teo_rubinstein} that \(L(2k,1)\) is splittable. Indeed, we exhibit that the Bredon-Wood embedding induces a splitting, showing that the resulting complement handlebody has genus $k-1$. In order to show which are its handles, let us analyze what happens when we cut $\mathcal V_1$ along $F_k$. Referring to Figure \ref{fig:handlebody_2k_1}, notice that each of the blue rectangular regions of  the blue  surface, will cut the solid cylinder into two open sets. One of these will be a "half solid cylinder" lying in between the rectangular region and  the lateral surface.

Thus the complement of the surface in the solid cylinder will have $k$ copies of these half cylinders and a 3-ball, $B$. When we glue the two boundary disks of the solid cylinder to form $\mathcal V_1$, both of the ends of each of these half cylinders, will be identified to the ball.  Denote with $a_1,a_2,...,a_k$  the circles in $\mathcal V_1$  formed by joining the ends of the (green) lines contained in the half solid cylinder with an arc that belongs to $B$. The manifolds $\mathcal V_1\setminus \mathcal N(F_k)$ is a handlebody $\mathcal H$ of genus $k$, having $a_1,a_2,...,a_k$  as cores of the handles. The boundary of $F_k$ will be glued to the boundary of a meridian disk, say $D$, of $\mathcal V_2$ to form the surface $U_k$. So, the $L(2k, 1)\setminus \mathcal N(U_k)$ is obtained by gluing the  $2$-handle  $\mathcal V_2\setminus \mathcal N(D)$ to $\mathcal H$ along a curve $\gamma$ that is  parallel to the dashed-dotted curve (that is the torus knot $(2k,1)$). Since $\gamma$ is homologous in $\mathcal H$ to the sum of all $a_i$'s the resulting manifold is an handlebody of genus $k-1$.

\subsection{The case of \texorpdfstring{$L(4a+4, 2a+1)$}{TEXT}}
Let $a>0$ be an integer, applying formula \eqref{formula:genus}, we obtain:
\[\dfrac{4a+4}{2a+1} = \dfrac{2(2a+1)+2}{2a+1} = 2+\dfrac{1}{\dfrac{2a+1}{2}} = 2+\dfrac{1}{a+\dfrac{1}{2}} = [2, a, 2]\]
so that $b_0 = a_0 = 2, b_1 = 0$ and $b_2 = a_2 = 2$. This means that $N(L(4a+4, 2a+1)) = 4$, and the genus of the minimal genus  non-orientable surface embedded in $L(4a+4, 2a+1)$ is two and so it is a Klein bottle. Since in this case $k = 2a+2$ and $g=2$,  the number of cycles $c(\tau)$ is 
$c(\tau) = k-g+1 = 2a+2-2+1 = 2a+1.$
For $h=1,\ldots,4a+4$, label $h$ the point $\xi^h = \textup{exp}(\frac{2\pi h i}{4a+4})$ on $S^1$ and  define $\tau: \mathbb{Z}_{4a+4} \rightarrow \mathbb{Z}_{4a+4}$  as (see Figure \ref{fig:4a+4})
\[\tau(1) = 2a+2,\quad \tau(2i) = 2i+1, \quad \tau(2j+1) = 2j+2, \quad i = 1, \dots, a; j = a+1, \dots, 2a+1.\]

The line segments  in $D\times\{0\}$ and $D\times\{1\}$ are the dashed arcs depicted in Figure \ref{fig:closed_tau_4a+4}, whose endpoints are connected along the lateral surface of the cylinder through the dotted helices. 
We will have one "long" cycle containing  8 vertices, and $2a$ "short" cycles with  4 vertices, namely: 
\begin{itemize}
    \item Long cycle: $1 \times\{0\} \rightarrow (2a+2) \times\{0\} \rightarrow (4a+3) \times\{1\} \rightarrow (4a+4) \times\{1\} \rightarrow (2a+3) \times\{0\} \rightarrow  (2a+4) \times\{0\} \rightarrow 1 \times\{1\} \rightarrow (2a+2) \times\{1\} \rightarrow 1\times\{0\}$
    \item Short cycles with $i \in \{2, 2a\}$, $i$ even: $i \times\{0\} \rightarrow (i+1) \times\{0\} \rightarrow (2a+i+2) \times\{1\} \rightarrow (2a+i+1) \times\{1\} \rightarrow i \times\{0\}$
    \item Short cycles with $i \in \{2a+3, 4a+3\}$, $i$ odd: $i \times\{0\} \rightarrow (i+1) \times\{0\} \rightarrow (2a+i+2) \times\{1\} \rightarrow (2a+i+1) \times\{1\} \rightarrow i \times\{0\}$
\end{itemize}
a total of $2a+1$ cycles as required. 
As in the previous case, identifying the boundary disks of the solid cylinder $D^2\times I$ produces the solid torus $\mathcal V_1$. The surface in this solid cylinder will identify to form the non-orientable surface $F_2$ for any choice of \(a\), neatly embedded in $\mathcal V_1$. By Theorem \ref{teo_rubinstein}, \(L(4a+4, 2a+1)\) are splittable lens spaces. We show, as in Section \ref{2k1}, the structure of complement of  \(\mathcal N(U_2)\) in the lens space, namely \(L(4a+4, 2a+1)\), which is a handlebody of genus 1. In order to understand its handles, we can refer to Figure \ref{fig:surface_F2_4a+4}, where a part of the surface is shown. Similarly to the previous case, when we cut \(\mathcal V_1\) along this blue surface, we divide it into two pieces: one containing the center of the cylinder, and the other one running along its surface and lying in between the blue surface and the lateral surface. This component will be the first handle of our handlebody. The second one will be obtained similarly, starting from the longer $\tau$ arc \([1, 2a+2]\) at level \(0\) and following accordingly. In this way, the manifold \(\mathcal{V}_1 \setminus\mathcal{N}(F_2)\) is a handlebody \(\mathcal{H}'\) of genus 2. As before, the boundary of \(F_2\) will be glued to the boundary of a meridian disk \(D\) of \(\mathcal{N}_2\) to form the Klein bottle \(U_2\). So, the \(L(4a+4, 2a+1) \setminus\mathcal{N}(U_2)\) is obtained similarly gluing the 2-handle \(\mathcal{V}_2 \setminus \mathcal{N}(D)\) along a curve parallel to the cores of the handles (which are parallel to the torus knot \(4a+4, 2a+1\)). As before, this curve is homologous in \(\mathcal{H}'\) to the sum of the cores of the two handles, and the resulting manifold is an handlebody of genus 1. 

\begin{figure}
   \centering
   \includegraphics[width = .4\textwidth]{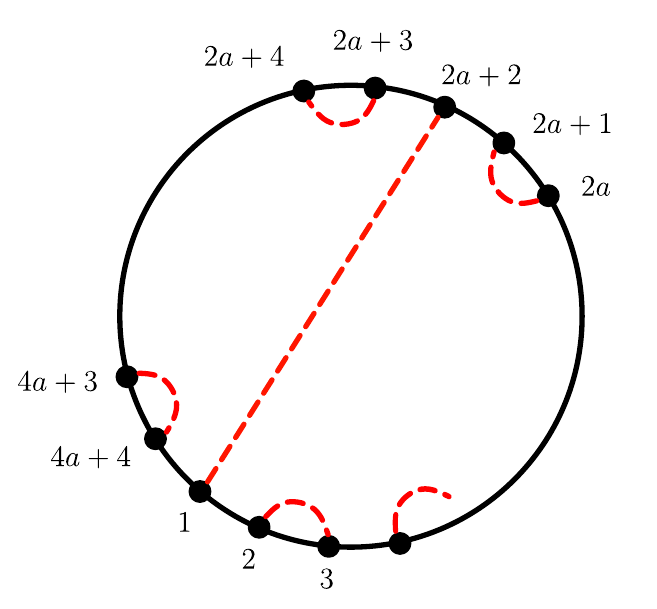} 
   \caption{A minimal genus \(\tau\) function for the case of \(L(4a+4, 2a+1)\).}
   \label{fig:4a+4}
\end{figure}

\begin{figure}
    \centering
    \includegraphics[width = .4\textwidth]{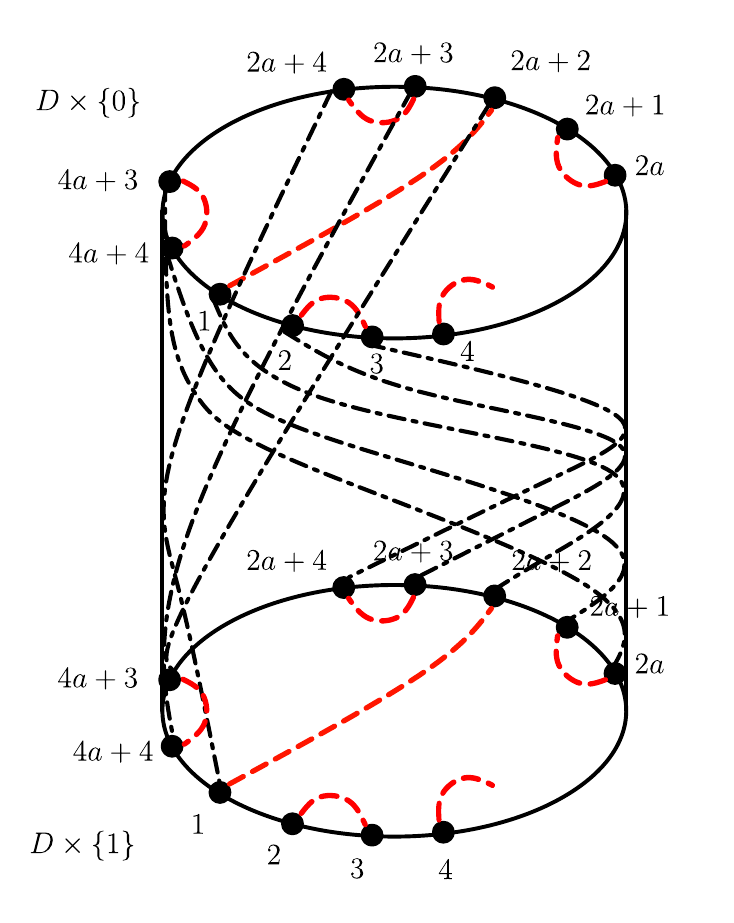}
    \caption{The collection of curves for a minimal genus \(\tau\) function for the case of \(L(4a+4, 2a+1)\). }
    \label{fig:closed_tau_4a+4}
\end{figure}

\begin{figure}
    \centering    
    \includegraphics[width = .5\textwidth]{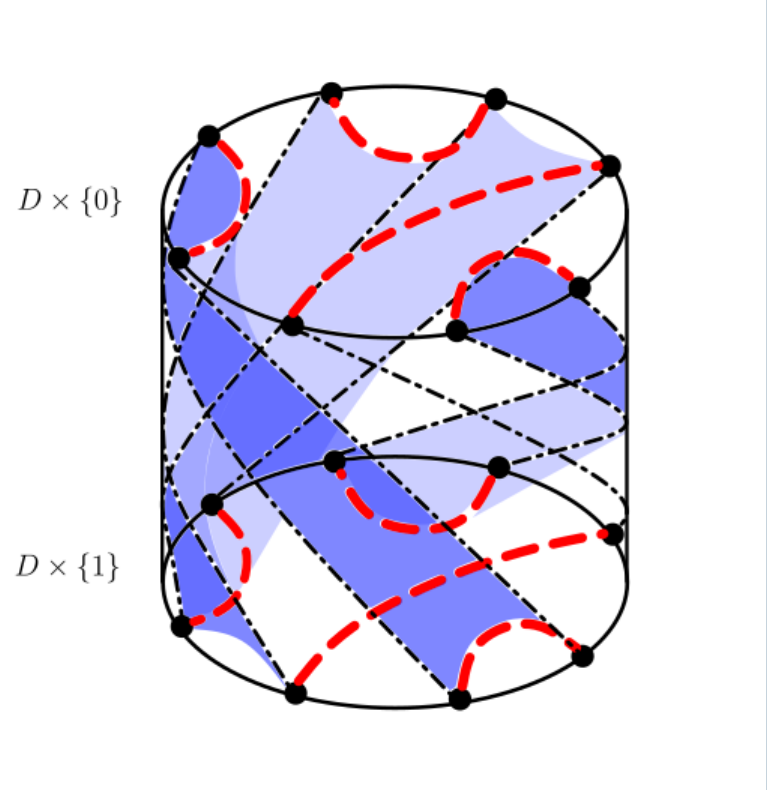}
    \caption{A part of the non-orientable surface \(F_2\).}    
    \label{fig:surface_F2_4a+4}
\end{figure}

\section{Manifolds of the type $\Sigma_g\times S^1$}\label{surfacebudlesection}

\begin{figure}
\centering
\includegraphics[width = .7\textwidth]{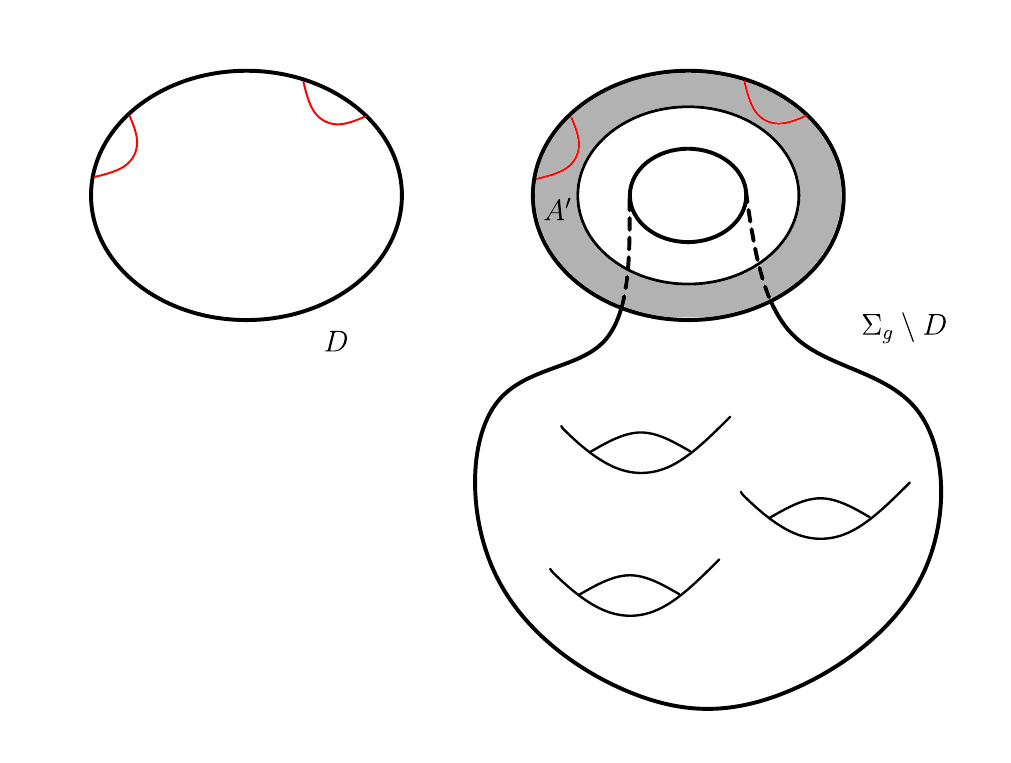}
\caption{The $\tau$ curves in a disk $D$ and its complement in $\Sigma_g$.}
\label{fig:surfacebundle}
\end{figure}

In this last section we deal with the 3-manifold $\mathcal{M}_g$ which is the trivial circle bundle over a genus $g$ orientable surface, $\Sigma_g$ and describe and embedded non-orientable surface inducing a one-sided splitting. 

Choose an arbitrary section, $\sigma: \Sigma_g\to \Sigma_g\times S ^1$ and denote with $p: \Sigma_g\times S ^1 \to \Sigma_g$ the projection. Let $c$ be a circle in $\Sigma_g$ bounding a disk $D$. In Figure \ref{fig:surfacebundle} there is a rough sketch of the two surfaces obtained by cutting $\Sigma_g$ along $\partial D$ in the case of $g=5$. Notice that the surface  on the right, that is $\Sigma_g\setminus \textup{int}(D)$, is homeomorphic to the thickening of a graph formed by $2g$ circles arranged in a row, where each circle meets the one on its right exactly at one point. The pull-back $p^{-1}(D)$ of the disk on the left in $\mathcal M_g$  is  a solid torus $\mathcal V=D\times S^1$ in this bundle. Let's denote the complement of $\mathcal V$ in $\mathcal{M}_g$ as $\mathcal V'$. Consider a $\tau$ function in $D$ as described in Section \ref{section_lens_spaces} and the corresponding arcs depicted in Figure  \ref{fig:surfacebundle} with  four points on $\partial D$ lying on  a (4,1)-knot in $\partial \mathcal V$. As described in Section \ref{2k1}, we can embed in $\mathcal V$ a compact non-orientable surface $F$, that is a Klein bottle with a disk removed so that: (1)  $F\cap \partial \mathcal V$ is the (4,1)-knot and $F\cap D$ are (2) the red arcs (as in Figure \ref{fig:handlebody_2k_1} but with two arcs instead of four). Consider the annulus $A'$ on the top part of the surface on the right side of Figure \ref{fig:surfacebundle}. The pull-back of $A'$ in $\mathcal V'$ is a thickened torus. We may construct a surface  $F'$ homemorphic to $F_{2g+2}$  inside $\mathcal V'$ in the following way. Notice that in the construction described in Section \ref{2k1} the M\"obius strips of $F$ are contained in a thickening of the boundary torus. So, if we consider the pull-back of the annulus $A'$ in $\mathcal V'$, we can construct two  M\"obius strips starting from the  red arcs and ending on $A'$. The union of the M\"obius strips together with the surface on the right of the picture is the non-orientable surface $F'$ in  $\mathcal V'$ homemorphic to $F\#\Sigma_g\cong F_{2g+2}$. By construction $F$ and $F'$ have a common boundary that is the (4,1)-knot on   $\partial \mathcal V=\partial \mathcal V'$. So if we glue  $F$ and $F'$  along it we obtain a closed connected non-orientable surface $U$ of genus $2g+4$. \\

Notice that $U$ can also be  described as follows. We take $\Sigma_g$, put four holes in it and add two cylinders on the newly created boundary circles connecting circles in couples. Each of these cylinders consists of two bands, one in $\mathcal V$ and the other in $\mathcal V'$. When joining back the cylinders in the pattern given by the $(4,1)$-knot of $\partial \mathcal V=\partial \mathcal V'$, each one of the bands will become part of a M\"obius strip. Hence,  the non-orientable genus of $U$ is $2g+4$. \\





\medskip
\medskip
\medskip
\bigskip

\textbf{Acknowledgements}
\begin{itemize}
    \item The authors would like to thank J. H. Rubinstein and W. Wang for bringing to their attention the work \cite{rubinstein1978one, rubinstein1979, rubinstein1982} and for insightful discussions on the topic.
    \item Alessia Cattabriga has been supported by the "National Group for Algebraic and Geometric Structures, and their Applications" (GNSAGA-INdAM) and University of Bologna, funds for selected research topics.
    \item Paolo Cavicchioli has been supported by the Slovenian Research and Innovation Agency program P1-0292, the Slovenian Research and Innovation Agency grant J1-4031, Inštitut za matematiko, fiziko in mehaniko (IMFM) Ljubljana and University of Bologna. 
\end{itemize}

\bibliographystyle{alpha}
\bibliography{main}

\end{document}